\numberwithin{equation}{section}
\definecolor{darkgreen}{rgb}{0,0.45,0}
  \newtheorem{proposition}{Proposition}[section]
  \newtheorem{lemma}[proposition]{Lemma}
  \newtheorem{corollary}[proposition]{Corollary}
  \theoremstyle{definition}
  \newtheorem{definition}[proposition]{Definition}
  \newtheorem{example}[proposition]{Example}
  \newtheorem{assumption}[proposition]{Assumption}
\theoremstyle{remark}
  \newcounter{c}
  \renewcommand{\[}{\setcounter{c}{1}$$}
  \newcommand{\etyk}[1]{\vspace{-7.4mm}$$\begin{equation}\Label{#1}
  \addtocounter{c}{1}}
  \renewcommand{\]}{\ifnum \value{c}=1 $$\else \end{equation}\fi}
\newcommand*{\inlineequation}[2][]{%
  \begingroup
    \refstepcounter{equation}%
    \ifx\\#1\\%
    \else
      \label{#1}%
    \fi
    \relpenalty=10000 %
    \binoppenalty=10000 %
    \ensuremath{%
      #2%
    }%
    ~\@eqnnum
  \endgroup
}
\def\@settitle{\begin{center}%
  \baselineskip14\p@\relax
  \bfseries
  \uppercasenonmath\@title
  \@title
  \ifx\@subtitle\@empty\else
     \\[1ex]\uppercasenonmath\@subtitle
     \footnotesize\mdseries\@subtitle
  \fi
  \end{center}%
}
\def\subtitle#1{\gdef\@subtitle{#1}}
\def\@subtitle{}
\newcommand{\coten}[1]{\raisebox{-7pt}{\ensuremath{\stackrel{\displaystyle  \Box}{\scriptstyle { #1}}}}}
\newcommand{\morcoten}{\scalebox{.7}{\ensuremath{\,\Box \,}}}
\newcommand{\diagcoten}{\scalebox{.55}{\ensuremath \Box}}
\newcommand{\expcoten}[1]{\scalebox{.55}{
{\raisebox{-8pt}{\ensuremath{\stackrel{\displaystyle  \Box}{\scriptstyle { #1}}}}}}}
\begin{document}

\title[Crossed modules of monoids I.]{Crossed modules of monoids I.}
\subtitle{Relative categories}

\author{Gabriella B\"ohm} 
\address{Wigner Research Centre for Physics, H-1525 Budapest 114,
P.O.B.\ 49, Hungary}
\email{bohm.gabriella@wigner.mta.hu}
\date{March 2018}
  
\begin{abstract}
This is the first part of a series of three strongly related papers in which
three equivalent structures are studied:
\begin{itemize}
\item[-] internal categories in categories of monoids; defined in terms of
pullbacks relative to a chosen class of spans
\item[-] crossed modules of monoids relative to this class of spans 
\item[-] simplicial monoids of so-called Moore length 1 relative to this class
  of spans. 
\end{itemize}
The most important examples of monoids that are covered are small categories
(treated as monoids in categories of spans) and bimonoids in symmetric
monoidal categories (regarded as monoids in categories of comonoids). 
In this first part the theory of relative pullbacks is worked out
leading to the definition of a relative category. 
\end{abstract}
  
\maketitle


\section*{Introduction} \label{sec:intro}

Loosely speaking, a {\em crossed module of a group} \cite{Whitehead} looks
like a normal subgroup but it needs not be an inclusion in general. Its
significance stems from its relation to various structures: a {\em simplicial
group whose Moore complex is concentrated in degrees 1 and 2} will be the
internal nerve of a {\em strict 2-group} and the Moore complex will be the
corresponding {\em crossed module}. These constructions establish, in fact,
equivalences between these three notions.  
Via the above links, crossed modules found diverse applications: in 
combinatorial homotopy, 
differential geometry,
the theory of classifying spaces, 
in non-abelian cohomology and even in 
(mathematical) physics, in topological and homotopical quantum field theories
\cite{Whitehead_I, Whitehead_II, BaezLauda, BreenMessing, Breen,
BrownHigginsSivera, Baues, MacLane, Quillen, Andre, Yetter:QFT, Yetter:TQFT,
DijkgraafWitten, PorterTuraev, Porter:TQFT, Porter:HQFT, Bantay}. Nice surveys
can be found in \cite{Paoli,Porter:Menagerie}.  

A proof of the equivalence between crossed modules and strict 2-groups (that is, of internal categories in the category of groups) can be found in \cite{BrownSpencer}, where it is referred also to an unpublished proof \cite{Duskin}.
Based on purely category theoretical arguments, using the semi-Abelian  structure of the category of groups, in \cite{Janelidze} George Janelidze gave another concise and highly elegant proof. An extensive analysis in  the semi-Abelian context was carried out in \cite{TVdL}.

Groups can be thought of as the Hopf monoids in the Cartesian monoidal
category of sets. 
Indeed, in any {\em monoidal category} one can discuss {\em monoids}
(i.e. objects equipped with an associative and unital
multiplication). Ordinary monoids are re-covered as monoids in the Cartesian
monoidal category of sets. Dually, one can define {\em comonoids} in arbitrary
monoidal categories as monoids in the opposite category. In Cartesian monoidal
categories every object has a unique comonoid structure so this gives nothing
interesting in the category of sets. Whenever a monoidal category is {\em
braided} as well --- that is, there is a natural isomorphism allowing to
switch the order of the factors in the monoidal product --- both monoids and
comonoids of this monoidal category constitute monoidal categories. Using this
fact, one can define {\em bimonoids} as monoids in the category of comonoids;
equivalently, as comonoids in the category of monoids. Again, if the monoidal
structure is Cartesian (e.g. in the category of sets) this gives nothing new:
bimonoids coincide with monoids. {\em Hopf monoids} in braided monoidal
categories are distinguished bimonoids for which a canonical morphism is
invertible. Hopf monoids in the category of sets are precisely the groups. 
Hopf monoids have been studied most intensively in the category of
vector spaces where they are known as {\em Hopf algebras}. 

Motivated by various applications, some research on {\em crossed modules of
Hopf algebras} \cite{Majid,FariaMartins} and of more general {\em Hopf monoids}
\cite{Aguiar,Villanueva} began. In these papers, crossed modules of Hopf
monoids were related to category-like objects in the category of Hopf
monoids. Most recently, in \cite{Emir} crossed modules over cocommutative Hopf
algebras were related to cocommutative simplicial Hopf algebras with length 2
Moore complex (using arguments based on direct computation).  

While Janelidze's approach in \cite{Janelidze} via  semi-Abelian categories
gives a very short proof and a very clear explanation of the equivalence
between internal categories and crossed modules, it is not directly applicable
to categories of Hopf monoids in arbitrary braided monoidal categories. While
groups constitute a semi-abelian category, general Hopf monoids do not (see
however \cite{GKV}). 
In order to obtain a theory which is conceptually as clear as
\cite{Janelidze}, but has a wider application, in the current series of papers
we develop a theory dealing with monoids in general, not necessarily Cartesian
monoidal categories. 
In this way we recover two classes of examples: 
\begin{itemize}
\item In the paper \cite{BrownIcen} one can find the definition of crossed
  modules of {\em groupoids}, which is generalized to any categories in a
  straightforward way. Regarding small categories as monoids in categories of
  spans, in our theory we re-obtain the crossed modules of small categories as
  a particular case. 
\item In \cite{Villanueva} one can find the definition of crossed
  modules of Hopf monoids in symmetric monoidal categories, which is again
  smoothly generalized to bimonoids. Regarding bimonoids as monoids in
  categories of comonoids, in our theory we re-obtain the crossed modules of
  bimonoids (so in particular of ordinary monoids in the category of sets) as
  a particular case.  
  Placing the results of \cite{Villanueva} in our more general framework, we also find a conceptual reason why they only hold in a {\em symmetric} monoidal category, what obstructs the generalization to an arbitrary braiding.
\end{itemize} 

In carrying out our programme, the first question to understand is what to
mean by an internal category 
in categories where arbitrary pullbacks may not exist (note the lack of
pullbacks in categories of comonoids of our main interest).
Resolving this problem, in this first part of the series we propose some
`admissibility' axioms on a class of spans and define pullbacks relative to
such a class $\mathcal S$. Assuming that relative pullbacks of those cospans
whose `legs are in $\mathcal S$' -- a terminology to be made precise later in
Definition \ref{def:legs_in_S} -- exist, as they do in the examples in our
mind, we obtain a monoidal category whose objects are the spans with their
legs in $\mathcal S$. An $\mathcal S$-relative category is meant then to be a
monoid therein.  

Working in a monoidal category  $\mathsf C$, we may require the compatibility of
our admissible class $\mathcal S$ of spans with the monoidal structure. With
this compatibility at hand, $\mathcal S$ induces an admissible class of spans
in the category of monoids in $\mathsf C$; hence relative categories in the
category of monoids are available. 
In Part II of this series \cite{Bohm:Xmod_II} their category is shown to be
equivalent to the category of relative crossed modules of monoids in a
suitable sense; and in Part III \cite{Bohm:Xmod_III} to the category of
relative simplicial monoids of so-called Moore length 1. 

Extending the picture on crossed modules (of groups) recalled above, {\em
$n$-crossed modules}  
can be seen as Moore complexes of {\em simplicial groups}, concentrated in
degrees up-to $n+1$; and such simplicial groups arise as suitable nerves of
{\em $\mathsf{Cat}^n$-groups} (i.e. $n$-fold categories in
the category of groups). Again, these correspondences are in fact equivalences
\cite{Conduche,Porter}. 
These equivalent viewpoints are both of conceptual and
practical use: each of them gives a different insight and interpretation of
the same thing; and they provide the possibility for finding the (sometimes
technically) smoothest approach in the applications
\cite{Conduche,EllisSteiner,Loday,Guin-Walery,Porter}.  
We believe that our methods should be suitable to obtain an analogous theory
of higher relative crossed modules of monoids what we plan to discuss
elsewhere.  

\subsection*{Acknowledgement} 
The author's interest in the subject was triggered by the excellent workshop
{\em `Modelling Topological Phases of Matter -- TQFT, HQFT, premodular and
  higher categories, Yetter-Drinfeld and crossed modules in disguise'} in
Leeds UK, 5-8 July 2016. It is a pleasure to thank the organizers,
Zolt\'an K\'ad\'ar, Jo\~ao Faria Martins, Marcos Cal\c{c}ada and Paul Martin
for the experience and a generous invitation.
Financial support by the Hungarian National Research, Development and
Innovation Office – NKFIH (grant K124138) is gratefully acknowledged.  


\section{Preliminaries on monoids in monoidal categories}

In this preliminary section we recall --- without, or with very sketchy proofs
--- some known facts about monoids that will play important roles in our later
constructions; in particular Part II. Throughout the section $\mathsf M$ denotes a monoidal category whose monoidal unit is $I$ and the monoidal product is denoted by juxtaposition. For the monoidal product of $n$ copies of the same object  $A$ also the power notation $A^n$ is used. The monoidal structure is not assumed to be strict but the associativity and unit coherence isomorphisms are not explicitly denoted. Whenever $\mathsf M$ is assumed to be braided monoidal, its braiding will be denoted by $c$. Composition of morphisms $f:A\to B$ and $g:B\to C$ is denoted by $g.f:A \to C$ and identity morphisms are denoted by $1$.

\begin{definition}\label{def:monoid}
A {\em monoid}  in $\mathsf{M}$  consists of an object $A$ together with a multiplication morphism 
$\xymatrix@C=12pt{
A^2\ar[r]^-m & 
A}$ 
and a unit morphism 
$\xymatrix@C=12pt{
I \ar[r]^-u & 
A}$ 
such that the associativity condition $m.m1=m.1m$ and the unit conditions $m.u1=1=m.1u$ hold (note the omitted coherence isomorphisms).
A {\em monoid morphism} is a morphism 
$\xymatrix@C=12pt{
A \ar[r]^-f & A'}$ 
for which $f.m=m'.ff$ and $f.u=u'$.
\end{definition}

\begin{lemma} \label{lem:joint-epi}
Two monoid morphisms 
$\xymatrix@C=15pt{
A \ar[r]|(.43){\, f\,} & C & B \ar[l]|(.43){\, g\,}}$
in $\mathsf M$ are joint epimorphisms of monoids whenever the induced morphism 
\begin{equation} \label{eq:q}
q:=
\xymatrix{
AB \ar[r]^-{fg} &
C^2 \ar[r]^-m &
C}
\end{equation}
is an epimorphism in $\mathsf M$.
\end{lemma}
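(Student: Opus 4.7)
The plan is to unfold the definition of a joint epimorphism of monoids and reduce the required equality to an equality after precomposition with $q$, which is then handled by the hypothesis that $q$ is an epimorphism in $\mathsf M$.

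Concretely, I would take two parallel monoid morphisms $h,k:C\to D$ (with multiplication $m'$ on $D$) satisfying $h.f=k.f$ and $h.g=k.g$, and aim to prove $h=k$. The first step is to observe that since $f$ and $g$ are monoid morphisms, the morphism $q=m.(fg):AB\to C$ is built out of $f$, $g$, and $m$; hence for any monoid morphism $\varphi:C\to D$, the composite $\varphi.q$ rewrites as $m'.((\varphi.f)(\varphi.g))$ using the monoid-morphism compatibility $\varphi.m=m'.(\varphi\varphi)$ and the functoriality of the monoidal product. Applying this to both $h$ and $k$ and invoking the assumed equalities $h.f=k.f$ and $h.g=k.g$ gives $h.q=k.q$.

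The second (and final) step is simply to use that $q$ is assumed to be an epimorphism in $\mathsf M$, which immediately yields $h=k$ as morphisms in $\mathsf M$; since $h$ and $k$ are monoid morphisms to begin with, this is the desired equality of monoid morphisms.

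There is no real obstacle here: the only thing to be slightly careful about is the bookkeeping of the coherence isomorphisms hidden in the juxtaposition $fg$ and the rewriting $\varphi.m.(fg)=m'.((\varphi.f)(\varphi.g))$, but this is a routine diagram chase using bifunctoriality of the monoidal product together with the defining equation $\varphi.m=m'.(\varphi\varphi)$ of a monoid morphism. The whole argument is therefore a short two-line computation followed by cancellation of the epimorphism $q$.
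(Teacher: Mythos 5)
Your proof is correct and follows exactly the paper's argument: the paper's one-line proof is precisely your computation that $h.f=k.f$ and $h.g=k.g$ imply $h.q=k.q$ (via $\varphi.q=m'.(\varphi.f)(\varphi.g)$ for a monoid morphism $\varphi$), followed by cancellation of the epimorphism $q$. Nothing to add.
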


\begin{proof}
If $x.f=y.f$ and $x.g=y.g$ for some parallel monoid morphisms $x$ and $y$, then also $x.q=y.q$. 
\end{proof}

\begin{definition}\label{def:dlaw}
A {\em distributive law} in $\mathsf M$ consists of two monoids $A$ and $B$ together with a morphism 
$\xymatrix@C=12pt{
BA \ar[r]^-x & AB}
$ 
such that the following identities hold.
\begin{eqnarray*}
x.m1=1m.x1.1x \qquad & x.u1=1u \\
x.1m=m1.1x.x1 \qquad & x.1u=u1
\end{eqnarray*}
\end{definition}

\begin{lemma} \label{lem:product-monoid}
For any distributive law 
$\xymatrix@C=12pt{
BA \ar[r]^-x & AB,}$ 
there is an induced monoid with object part $AB$, unit 
$\xymatrix@C=15pt{
I \ar[r]^-{uu}
&  AB}$ 
and multiplication 
$\xymatrix{
(AB)^2 \ar[r]^-{1x1} &
A^2B^2  \ar[r]^-{mm} &
AB.}$
For this monoid both 
$\xymatrix@C=15pt{A \ar[r]^-{1u} & AB}$ and
$\xymatrix@C=15pt{B \ar[r]^-{u1} & AB}$ are monoid morphisms.

\end{lemma}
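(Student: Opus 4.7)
The plan is to verify in turn (i) associativity of the candidate multiplication $m_{AB}:=mm.1x1$, (ii) the two unit conditions for $uu$, and (iii) compatibility of $1u$ and $u1$ with units and multiplications. All of this is a standard consequence of Beck's distributive-law calculus, so no new idea is needed; the work lies in bookkeeping.

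For associativity I would expand both $m_{AB}.m_{AB}1$ and $m_{AB}.1m_{AB}$ on $(AB)^3=A B A B A B$. Each composite contains two copies of the swap $x$: for $m_{AB}.m_{AB}1$ one first applies $1x1$ on the leftmost $ABAB$, multiplies, and then applies another $1x1$ on the result paired with the remaining $AB$; symmetrically on the right. The middle two axioms of Definition \ref{def:dlaw}, namely $x.m1=1m.x1.1x$ and $x.1m=m1.1x.x1$, let me move the single $x$ that sits in the middle past a multiplication of $A$'s (respectively of $B$'s), and after that the associativity of $m$ on $A$ and on $B$ separately matches the two sides. The unit conditions $m_{AB}.(uu)1=1=m_{AB}.1(uu)$ follow directly from $x.u1=1u$ and $x.1u=u1$ combined with the unit laws of $A$ and $B$: for instance, $m_{AB}.(uu)1 = mm.1x1.(uu)1 = mm.(u(1u))1 = (m.u1)(m.u1) = 11 = 1$, and similarly on the other side.

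It remains to check that $1u$ and $u1$ are monoid morphisms. The unit equations $(1u).u = uu = (u1).u$ are automatic. For multiplicativity of $1u:A\to AB$ one computes
\[
m_{AB}.(1u)(1u) = mm.1x1.1(uu)1 = mm.1(x.u1)1 = mm.1(1u)1 = m(m.uu) = m.(1\cdot u),
\]
which equals $(1u).m$ after using $m.uu=u$; here the crucial ingredient is the axiom $x.u1=1u$. The verification for $u1$ is dual, using $x.1u=u1$ in place of $x.u1=1u$.

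The main obstacle, as usual for such Beck-style arguments, is the associativity check: keeping track of the six tensor factors and the two appearances of $x$ in each of the two composites, and identifying exactly where each of the middle two distributive-law axioms is applied. Once the diagram is drawn with the tensor factors labelled, the matching of the two sides is mechanical.
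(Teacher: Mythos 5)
Your outline is correct and is the standard Beck-style verification of the composite monoid induced by a distributive law; the paper itself states this lemma in its preliminaries explicitly ``without, or with very sketchy proofs'' and supplies no argument at all, so there is no alternative route to compare against. The only blemish is notational: in the multiplicativity check for $1u$ the composite $1x1.(1u)(1u)$ should read $1(x.u1)u=1(1u)u=11uu$ rather than $1(x.u1)1$, but the axiom you invoke ($x.u1=1u$) and the conclusion $m_{AB}.(1u)(1u)=m(m.uu)=(1u).m$ are exactly right, as is your identification of the two multiplication axioms of Definition \ref{def:dlaw} as the ingredients needed for associativity.
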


\begin{lemma} \label{lem:monoid-factorization}
Consider monoid morphisms 
$\xymatrix@C=15pt{
A \ar[r]|(.43){\, f\,} & C & B \ar[l]|(.43){\, g\,}
}$
such that the induced morphism 
\eqref{eq:q} is invertible. Then the unique monoid structure on $AB$ for which $q$ is a monoid morphism is induced by the distributive law 
$$
\xymatrix{
BA \ar[r]^-{gf} &
C^2 \ar[r]^-m &
C \ar[r]^-{q^{-1}} &
AB.}
$$
\end{lemma}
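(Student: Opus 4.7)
My plan is to identify the claimed monoid structure on $AB$ with the unique one obtained by transporting that of $C$ along the invertible morphism $q$, and then to recognize this transported structure as the one produced by Lemma \ref{lem:product-monoid} out of the morphism $x := q^{-1}.m.gf$. Throughout I exploit that $q$, being invertible, is in particular a monomorphism, so that equalities of morphisms with codomain $AB$ can be tested after post-composing with $q$; by construction $q.x = m.gf$.

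The first task is to verify that $x$ satisfies the four axioms of Definition \ref{def:dlaw}. Both unit axioms follow, after composition with the mono $q$, from the monoid-morphism conditions $f.u = u = g.u$ and the unit laws for $m$: for example, $q.x.u1 = m.gf.u1 = m.(u \otimes f) = f = m.(f \otimes u) = q.1u$, and the axiom $x.1u = u1$ is the symmetric computation with the roles of $f$ and $g$ exchanged. For the multiplicativity axiom $x.m1 = 1m.x1.1x$, post-composition with $q$ turns the left-hand side, via $g.m = m.gg$, into the morphism $m.(m \otimes 1).(g \otimes g \otimes f) : B^2A \to C$; the right-hand side reduces to the same morphism by eliminating the two inner occurrences of $x$ one at a time through $m.fg.x = m.gf$ and then rebracketing the resulting iterated product in $C$ by associativity. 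The axiom $x.1m = m1.1x.x1$ is handled symmetrically, with both sides reducing to $m.(m \otimes 1).(g \otimes f \otimes f) : BA^2 \to C$.

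Given that $x$ is a distributive law, Lemma \ref{lem:product-monoid} equips $AB$ with a monoid having unit $uu$ and multiplication $mm.1x1$. I would then check that $q$ is a monoid morphism for this structure. Unit compatibility is immediate: $q.uu = m.(f.u \otimes g.u) = m.(u \otimes u) = u$. For the multiplication, bifunctoriality gives $q.mm.1x1 = m.mm.ffgg.1x1 = m.mm.(f \otimes (fg).x \otimes g)$; substituting $m.fg.x = m.gf$ in the middle factor and using associativity of $m$ to rebracket the resulting fourfold product in $C$ yields $m.mm.fgfg = m.qq$, as required.

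Uniqueness is then automatic from the invertibility of $q$: any monoid structure on $AB$ rendering $q$ a monoid morphism must have unit $q^{-1}.u$ and multiplication $q^{-1}.m.qq$, and is therefore uniquely determined; the structure produced by Lemma \ref{lem:product-monoid} is one such, hence \emph{the} unique one. The only nontrivial technical work lies in the verification of the distributive-law axioms, but each of them reduces, after composing with the mono $q$, to the defining relation $q.x = m.gf$ together with associativity in $C$; this is the one place where careful bookkeeping of tensor factors and their bracketings is needed.
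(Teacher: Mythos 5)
Your proof is correct and complete. The paper itself gives no proof of this lemma (it sits in the preliminary section whose facts are recalled ``without, or with very sketchy proofs''), so there is nothing to compare against; your argument --- checking the four distributive-law axioms after composing with the monomorphism $q$ using only $q.x=m.gf$, the monoid-morphism property of $f$ and $g$, and associativity in $C$, then invoking Lemma \ref{lem:product-monoid} and deducing uniqueness from the fact that an invertible $q$ forces $m_{AB}=q^{-1}.m.qq$ and $u_{AB}=q^{-1}.u$ --- is exactly the standard verification the author is implicitly relying on.
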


\begin{lemma} \label{lem:monoid-morphism}
For a distributive law 
$\xymatrix@C=12pt{
BA \ar[r]^-x & AB
}$ 
and a monoid $C$, there is a bijective correspondence between the following data.
\begin{itemize}
\item[{(i)}] monoid morphisms 
$\xymatrix@C=12pt{
AB \ar[r]^-c &
C
}$ (where the monoid structure of $AB$ is induced by $x$)
\item[{(ii)}] pairs of monoid morphisms 
$(
\xymatrix@C=12pt{
A \ar[r]^-a & C},
\xymatrix@C=12pt{
B \ar[r]^-b & C}
)$
such that $m.ab.x=m.ba$.
\end{itemize}
\end{lemma}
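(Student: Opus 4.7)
The plan is to define an explicit pair of maps between the sets in (i) and (ii) and to show that they are mutually inverse. From (i) to (ii) I send a monoid morphism $c\colon AB\to C$ to the pair $(a,b):=(c.1u,\ c.u1)$; by Lemma \ref{lem:product-monoid} both $1u\colon A\to AB$ and $u1\colon B\to AB$ are monoid morphisms, so $a$ and $b$ are monoid morphisms as well. From (ii) to (i) I send a pair $(a,b)$ satisfying $m.ab.x=m.ba$ to $c:=m.ab\colon AB\to C$.

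The preparatory computation underlying everything is the evaluation of the product multiplication $m_{AB}=mm.1x1$ on the two obvious maps $AB\to ABAB$ and $BA\to ABAB$. Using the distributive-law unit axioms $x.u1=1u$ and $x.1u=u1$ together with the unit axioms of the monoids $A$ and $B$, one checks directly that $m_{AB}.(1u)(u1)=1_{AB}$ and $m_{AB}.(u1)(1u)=x$.

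Given these two identities, the forward direction is essentially formal. For $c$ in (i), the fact that $c$ preserves multiplication gives $m.ab=c.m_{AB}.(1u)(u1)=c$ and, symmetrically, $m.ba=c.m_{AB}.(u1)(1u)=c.x$; in particular the compatibility $m.ab.x=m.ba$ holds automatically, and the round trip (i)$\to$(ii)$\to$(i) returns $c$. The other round trip is just as quick: starting from $(a,b)$ and setting $c=m.ab$, the unit axioms yield $c.1u=m.ab.1u=m.au=a$ (using $b.u=u$ in $C$ and $m.1u=1$), and symmetrically $c.u1=b$.

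The only step that actually uses the compatibility hypothesis, and the main technical point, is the verification that $c:=m.ab$ is a monoid morphism for a given compatible pair $(a,b)$. Unitality is immediate from $u_{AB}=uu$ together with the unit axioms of $a,b$ and $C$. Multiplicativity, after expanding $m_{AB}=mm.1x1$ in the equation $c.m_{AB}=m.cc$ and using that $a$ and $b$ preserve multiplication on each side, reduces to an equality between two four-fold composites $(AB)^2\to C$. Regrouping the middle two $C$-tensorands under a single multiplication by associativity of $C$ then collapses this identity to precisely $m.ab.x=m.ba$, which is the hypothesis. Keeping track of the tensor positions and the associator in this last regrouping is the only step I expect to demand real care.
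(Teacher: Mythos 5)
Your proposal is correct and uses exactly the same bijection as the paper ($c\mapsto(c.1u,c.u1)$ and $(a,b)\mapsto m.ab$); the paper's own proof merely states these two assignments without verification, so your write-up simply supplies the routine checks the paper leaves to the reader.
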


\begin{proof}
A monoid morphism $c$ in part (i) is sent to the pair $(c.1u,c.u1)$. Conversely, a pair $(a,b)$ in part (ii) is sent to $m.ab$.
\end{proof}

\begin{corollary} \label{cor:factorising-morphism}
Consider monoid morphisms 
$\xymatrix@C=15pt{
A \ar[r]|(.43){\, f\,} & C & B \ar[l]|(.43){\, g\,}
}$
such that the induced morphism 
\eqref{eq:q} is invertible. For any monoid $D$, there is a bijective correspondence between the following data.
\begin{itemize}
\item[{(i)}] monoid morphisms 
$\xymatrix@C=12pt{
C \ar[r]^-c &
D}$
\item[{(ii)}] pairs of monoid morphisms 
$(
\xymatrix@C=12pt{
A \ar[r]^-a & D},
\xymatrix@C=12pt{
B \ar[r]^-b & D}
)$
such that $m.ab.q^{-1}.m.gf=m.ba$.
\end{itemize}
The morphism $c$ in part (i) is equal to 
$\xymatrix@C=15pt{C\ar[r]^-{q^{-1}} & AB \ar[r]^-{ab} & D^2 \ar[r]^-m & D}$.
It is the unique simultaneous solution of the equations $c.f=a$ and $c.g=b$.
\end{corollary}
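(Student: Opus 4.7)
The plan is to deduce this corollary by transferring the statement of Lemma \ref{lem:monoid-morphism} along the monoid isomorphism $q$ of Lemma \ref{lem:monoid-factorization}. Concretely: when $q$ is invertible, Lemma \ref{lem:monoid-factorization} makes $AB$ into a monoid via the distributive law $x := q^{-1}.m.gf : BA \to AB$ in such a way that $q$ becomes an isomorphism of monoids. Composition with $q^{-1}$ therefore yields a bijection between monoid morphisms $C \to D$ and monoid morphisms $AB \to D$, and we compose this with the bijection provided by Lemma \ref{lem:monoid-morphism}.

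Applied to this particular $x$, Lemma \ref{lem:monoid-morphism} gives that monoid morphisms $AB \to D$ correspond to pairs of monoid morphisms $(a,b)$ satisfying $m.ab.x = m.ba$; substituting the formula for $x$ gives exactly the displayed condition $m.ab.q^{-1}.m.gf = m.ba$. Tracing the bijection, a pair $(a,b)$ is sent by Lemma \ref{lem:monoid-morphism} to the monoid morphism $m.ab : AB \to D$, so the associated morphism $C \to D$ is $c = m.ab.q^{-1}$, as stated.

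To check that $c.f = a$ and $c.g = b$, I would verify the auxiliary identities $q.1u = f$ and $q.u1 = g$ (equivalently $q^{-1}.f = 1u$ and $q^{-1}.g = u1$). For the first, $q.1u = m.fg.1u = m.f(g.u) = m.fu = m.1u.f = f$, using that $g$ is a monoid morphism and the unit condition $m.1u = 1$; the second is symmetric. From this, $c.f = m.ab.q^{-1}.f = m.ab.1u = m.a(b.u) = m.au = a$, and likewise $c.g = b$. Uniqueness of such a $c$ follows from Lemma \ref{lem:joint-epi}, since $q = m.fg$ is an isomorphism, hence an epimorphism, so $f$ and $g$ are jointly epimorphic on monoids.

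There is no genuine obstacle; the main thing requiring care is the book-keeping in Step 1, i.e.\ checking that the distributive law $x$ supplied by Lemma \ref{lem:monoid-factorization} is precisely $q^{-1}.m.gf$ (so that the compatibility condition in Lemma \ref{lem:monoid-morphism} translates verbatim into the one stated in the corollary), and the small unit-axiom manipulations establishing $q.1u = f$ and $q.u1 = g$ that link the explicit formula $c = m.ab.q^{-1}$ to the defining equations $c.f = a$ and $c.g = b$.
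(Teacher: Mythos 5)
The paper states this corollary without proof, but your derivation is exactly the intended one: combine Lemma \ref{lem:monoid-factorization} (which equips $AB$ with the monoid structure making $q$ an isomorphism of monoids, via the distributive law $x=q^{-1}.m.gf$) with Lemma \ref{lem:monoid-morphism}, then verify $q.1u=f$ and $q.u1=g$ to identify $c=m.ab.q^{-1}$ as the unique common solution of $c.f=a$, $c.g=b$ (uniqueness via Lemma \ref{lem:joint-epi}). All steps check out; nothing is missing.
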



\section{Admissible classes of spans}
\label{sec:S}

We are interested in categories --- like the categories of comonoids in
symmetric monoidal categories, see the Introduction --- in which general
pullbacks may not exist. Instead, we will assume the existence of certain {\em
relative} pullbacks with respect to some distinguished class of spans. By
this motivation in this section we investigate the expected properties of such
a class.

\begin{definition} \label{def:admissibleS}
A class $\mathcal S$ of spans in any category $\mathsf C$ is said to be {\em admissible} if it satisfies the following two conditions. 
\begin{itemize} 
\item[{(PO}]\hspace{-.18cm}ST)
If  
$\xymatrix@C=12pt{
X & \ar[l]_-f A \ar[r]^-g & Y}
\in {\mathcal S}$ 
then 
$\xymatrix@C=12pt{
X' &  
\ar[l]_-{f'} X & 
\ar[l]_-f A \ar[r]^-g & 
Y \ar[r]^-{g'} &
Y'}
\in {\mathcal S}$  
too, for any morphisms 
$\xymatrix@C=12pt{
X \ar[r]^-{f'} & X'} 
\textrm{ and }
\xymatrix@C=12pt{
Y \ar[r]^-{g'} & Y'}$.
\item[{(PR}]\hspace{-.17cm}E)
If  
$\xymatrix@C=12pt{
X & \ar[l]_-f A \ar[r]^-g & Y}
\in {\mathcal S}$ 
then 
$\xymatrix@C=12pt{
X & 
\ar[l]_-f A &
\ar[l]_-h B \ar[r]^-h &
A \ar[r]^-g & Y}
\in {\mathcal S}$ 
too, for any morphism 
$\xymatrix@C=12pt{
B \ar[r]^-h & A}$.
\end{itemize}
\end{definition}

\begin{example} \label{ex:allS}
The class of all spans in a category is clearly admissible.
\end{example}

\begin{example} \label{ex:CoMonS}
For a monoidal category $\mathsf M$, let $\mathsf C$ be the
category of comonoids in $\mathsf M$ (that is, the category of monoids in the
monoidal category $\mathsf{M}^{\mathsf{rev}}$ with the opposite composition). 
Assume that $\mathsf M$ is braided monoidal (with braiding $c$). Then $\mathsf
C$ inherits the monoidal structure of $\mathsf M$: the monoidal unit $I$ is a
trivial comonoid with comultiplication $I\cong I^2$ provided by the unit
isomorphisms, and the monoidal product $AC$ of any comonoids $A$ and $C$ is a
comonoid via the comultiplication
$\xymatrix@C=12pt{AC \ar[r]^-{\delta \delta} & A^2C^2\ar[r]^-{1c1}& (AC)^2}$
induced by the comultiplications
$\xymatrix@C=12pt{A \ar[r]^-\delta & A^2}$ and
$\xymatrix@C=12pt{C \ar[r]^-\delta & C^2}$.

Define $\mathcal S$ to contain precisely those spans
$\xymatrix@C=12pt{
X & \ar[l]_-f A \ar[r]^-g & Y}$
in $\mathsf C$ for which the composite morphism
$\xymatrix@C=12pt{A \ar[r]^-\delta & A^2 \ar[r]^-{fg} & XY}$ 
is a {\em comonoid morphism}; equivalently, the equality
$c.fg.\delta=gf.\delta$ holds.

For any comonoid morphisms $f'$ and $g'$ of respective domains $X$ and $Y$,
the monoidal product $f'g'$ is a comonoid morphism. Then so is
$f'g'.fg.\delta$ for any 
$\xymatrix@C=12pt{
X & \ar[l]_-f A \ar[r]^-g & Y}
\in {\mathcal S}$, so that condition (POST) is satisfied.

On the other hand, for any comonoid morphism $h$ of codomain $A$,
$fg.\delta.h=fg.hh.\delta$ is a comonoid morphism so also (PRE) holds. 

The current example can be considered in the particular situation when
$\mathsf M$ is a Cartesian monoidal (so symmetric monoidal) category. Then
every object has a unique comonoid structure; that is, $\mathsf C$ and
$\mathsf M$ are isomorphic. In particular, every comonoid is cocommutative
(that is, the comultiplication $\delta$ and the symmetry $c$ satisfy
$c.\delta=\delta$). Then the class $\mathcal S$ of spans above is the class of
all spans in $\mathsf C \cong \mathsf M$.  
\end{example}

\begin{lemma} \label{lem:splitS}
Let $\mathcal S$ be an admissible class of spans in an arbitrary category $\mathsf C$ and let 
$\xymatrix@C=15pt{
A \ar@<2pt>@{ >->}[r]^-i &
B\ar@<2pt>@{->>}[l]^-s}$
be a split epimorphism in $\mathsf C$.
\begin{itemize}
\item[{(1)}] The following assertions are equivalent.
\begin{itemize}
\item[{(a)}] 
$\xymatrix@C=12pt{
B & \ar@{=}[l] B \ar@{=}[r]  &B}
\in \mathcal S$.
\item[{(b)}] 
$\xymatrix@C=12pt{
X & \ar[l]_-f B \ar[r]^-g & Y}
\in \mathcal S$ for any morphisms $f$ and $g$ of domain $B$.
\item[{(c)}]  
$\xymatrix@C=12pt{
A & \ar[l]_-i B  \ar@{=}[r]  &B}
\in \mathcal S$.
\end{itemize}
\item[{(2)}] The equivalent assertions of part (1) hold whenever 
$\xymatrix@C=12pt{
A & \ar@{=}[l] A \ar[r]^-s & B}
\in \mathcal S$.
\end{itemize}
\end{lemma}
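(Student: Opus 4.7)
The plan for Part (1) is to argue in a cycle (a)$\Rightarrow$(b)$\Rightarrow$(c)$\Rightarrow$(a); for Part (2) I would reduce the hypothesis directly to the span appearing in Part (1)(c). Each step rests on an application of one of the two admissibility axioms together with the defining identity of the split epimorphism (one of the composites $i.s$ or $s.i$ being an identity morphism).

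For the implication (a)$\Rightarrow$(b) the idea is to apply (POST) to the identity span on $B$: postcomposing its two identity legs with arbitrary $f:B\to X$ and $g:B\to Y$ gives at once the span of (b). The implication (b)$\Rightarrow$(c) is a triviality, since (c) is a particular instance of (b). For (c)$\Rightarrow$(a) I would apply (POST) again, this time postcomposing the non-identity leg of the span in (c) with the remaining member of the split-epi pair; the split-epi identity collapses the resulting composite to $1_B$, so the span obtained is precisely $(B = B = B)$.

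For Part (2) the key move is to apply axiom (PRE) to the hypothesis span $(A\xleftarrow{=}A\to B)$, using as the precomposing morphism $h$ the arrow $B\to A$ of the split-epi pair. Both legs of the span get precomposed with $h$; the right-hand leg collapses to $1_B$ by the split-epi identity, while the left-hand leg becomes exactly the leg $B\to A$ appearing in Part (1)(c). Thus the resulting span is exactly the one in (c), and Part (1) then supplies all three equivalent assertions.

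I expect no real obstacle: the main point of care is to select the correct admissibility axiom for each step --- (POST) for reshaping the target objects of a span, (PRE) for changing its apex --- and to keep track of which composition of $i$ and $s$ is the identity; once these are identified the arguments are purely mechanical.
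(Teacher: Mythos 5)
Your proof is correct and takes essentially the same route as the paper's: (POST) applied to the identity span for (a)$\Rightarrow$(b), the trivial specialization for (b)$\Rightarrow$(c), (POST) plus the splitting identity for (c)$\Rightarrow$(a), and (PRE) with the section $B\to A$ for part (2). The one detail to pin down in writing it up is the orientation issue you already flag: the composite that must equal the identity is $B\to A\to B$, i.e.\ $s.i=1_B$ with $i\colon B\to A$ the section and $s\colon A\to B$ the retraction (the lemma's displayed diagram is misleading on this point).
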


\begin{proof}
Assertion (a) of part (1) implies (b) by condition (POST) on $\mathcal S$, post-composing by $f$ on the left and by $g$ on the right. 
Assertion (b) trivially implies (c). Finally, (c) implies (a) by (POST), post-composing on the left by $s$ and using $s.i=1$. 
The condition in part (2) implies (c) of part (1)  by (PRE), pre-composing by $i$ and using $s.i=1$ again.
\end{proof}

\begin{definition} \label{def:monoidalS}
A class ${\mathcal S}$ of spans in a monoidal category $\mathsf M$ is said to be {\em monoidal} if it satisfies the following two conditions.
\begin{itemize}
\item[{(UNI}] \hspace{-.35cm} TAL)
For any morphisms $f$ and $g$ whose domain is the monoidal unit $I$,
$\xymatrix@C=12pt{
X & \ar[l]_-f I \ar[r]^-g & Y}
\in {\mathcal S}$.
\item[{(MU}] \hspace{-.3cm} LTIPLICATIVE)
If both
$\xymatrix@C=12pt{
X & \ar[l]_-f A \ar[r]^-g & Y}
\in {\mathcal S}$ 
and
$\xymatrix@C=12pt{
X' & \ar[l]_-{f'} A' \ar[r]^-{g'} & Y'}
\in {\mathcal S}$ 
then also
$\xymatrix@C=12pt{
XX' & \ar[l]_-{ff'} A A'\ar[r]^-{gg'} & YY'}
\in {\mathcal S}$.
\end{itemize}
\end{definition}

A class of spans satisfying (POST) is unital if and only if 
$\xymatrix@C=12pt{
I & \ar@{=}[l] I \ar@{=}[r] & I}
\in {\mathcal S}$.

\begin{example} \label{ex:allSmonoidal}
The class of all spans in a monoidal category is clearly monoidal.
\end{example}

\begin{example} \label{ex:CoMonSmonoidal}
For a braided monoidal category $\mathsf M$ (with braiding $c$) let $\mathsf C$ be the category of comonoids in $\mathsf M$.  It is monoidal via the monoidal product of $\mathsf M$, see Example \ref{ex:CoMonS}. 
Below we show that  the class $\mathcal S$ in Example \ref{ex:CoMonS} of spans in $\mathsf C$ is monoidal whenever the symmetry is a braiding; that is, $c^{-1}=c$. (This explains in a conceptual way why in \cite{Villanueva} it is dealt only with symmetric monoidal categories not with arbitrary braidings.)

By the coherence of the braiding, the comultiplication $\delta$ of $I$ satisfies $c.\delta=\delta$. Then 
$\xymatrix@C=12pt{
I & \ar@{=}[l] I \ar@{=}[r] & I}
\in {\mathcal S}$, and so the unitality of $\mathcal S$ follows by its property (POST), see Example \ref{ex:CoMonS}.

For any 
$\xymatrix@C=12pt{
X & \ar[l]_-f A \ar[r]^-g & Y}
\in {\mathcal S}$ 
and
$\xymatrix@C=12pt{
X' & \ar[l]_-{f'} A' \ar[r]^-{g'} & Y'}
\in {\mathcal S}$ 
the following diagram commutes.
$$
\xymatrix{
(AA')^2 \ar[dd]_-{ff'gg'} &
A^2A^{\prime 2} \ar[dd]^-{fgf'g'} \ar[l]_-{1c1} &
AA' \ar[l]_-{\delta\delta'} \ar[r]^-{\delta\delta'} &
A^2A^{\prime 2} \ar[d]^-{gfg'f'} \ar[r]^-{1c1} &
(AA')^2 \ar[d]^-{gg'ff'} \\
&&& YXY'X' \ar[r]^-{1c1} \ar[d]^-{c^{-1}c^{-1}} &
YY'XX' \ar[d]^-{c^{-1}_{XX',YY'}} \\
XX'YY' &
XYX'Y' \ar[l]^-{1c1} \ar@{=}[rr] &&
XYX'Y' \ar[r]_-{1c^{-1}1} &
XX'YY'}
$$
If $c$ is a symmetry, then the arrows of the bottom row are equal isomorphisms proving the equality of the top-left and the top-right paths; that is, 
$\xymatrix@C=12pt{
XX' & \ar[l]_-{ff'} AA' \ar[r]^-{gg'} & YY'}
\in {\mathcal S}$.
\end{example}

\begin{example} \label{ex:Mon(M)S}
Consider any class $\mathcal S'$ of spans in an arbitrary category $\mathsf C'$. For any functor $U:\mathsf C \to \mathsf C'$ define the class $\mathcal S$ which contains precisely those spans in $\mathsf C$ whose image belongs to $\mathcal S'$. 
\begin{itemize}
\item[{(1)}] If $\mathcal S'$ is admissible then so is $\mathcal S$.
\item[{(2)}]Assume that $\mathsf C$ and  $\mathsf C'$ are monoidal categories and $U$ is a strict monoidal functor. If $\mathcal S'$ is monoidal then so is $\mathcal S$.
\end{itemize}

For (1) note that 
$\xymatrix@C=12pt{
X & \ar[l]_-f A \ar[r]^-g & Y}
\in {\mathcal S}$ 
if and only if
$\xymatrix@C=12pt{
UX & \ar[l]_-{Uf} UA \ar[r]^-{Ug} & UY}
\in {\mathcal S'}$.
If this is the case, then by property (POST) of  ${\mathcal S'}$, also
$\xymatrix@C=12pt{
UX' &
UX \ar[l]_-{Uf'} & 
\ar[l]_-{Uf} UA \ar[r]^-{Ug} & 
UY \ar[r]^-{Ug'} &
UY'}
\in {\mathcal S'}$
for all morphisms $f'$ and $g'$ in $\mathsf C$ with respective domains $X$ and $Y$. By definition this is equivalent to
$\xymatrix@C=12pt{
X' &
X \ar[l]_-{f'} & 
\ar[l]_-{f} A \ar[r]^-{g} & 
Y \ar[r]^-{g'} &
Y'}
\in {\mathcal S}$
proving property (POST) of ${\mathcal S}$.  Analogous reasoning applies to property (PRE).

For (2) observe that for any span 
$\xymatrix@C=12pt{
X & \ar[l]_-f I  \ar[r]^-g & Y}$
in $\mathcal C$,
$\xymatrix@C=12pt{
UX & \ar[l]_-{Uf}UI  =I' \ar[r]^-{Ug} & UY}
\in {\mathcal S'}$
by the unitality of ${\mathcal S'}$. Then 
$\xymatrix@C=12pt{
X & \ar[l]_-f I  \ar[r]^-g & Y}
\in \mathcal S$  by definition so that $\mathcal S$ is unital.

For 
$\xymatrix@C=12pt{
X & \ar[l]_-f A \ar[r]^-g & Y}
\in {\mathcal S}$ 
and
$\xymatrix@C=12pt{
X' & \ar[l]_-{f'} A' \ar[r]^-{g'} & Y'}
\in {\mathcal S}$,
$\xymatrix@C=12pt{
UX & \ar[l]_-{Uf} UA \ar[r]^-{Ug} & UY}
\in {\mathcal S'}$ 
and
$\xymatrix@C=12pt{
UX' & \ar[l]_-{Uf'} UA' \ar[r]^-{Ug'} & UY'}
\in {\mathcal S'}$ by definition. Then by the multiplicativity of $\mathcal S'$, also 
$$
\xymatrix@C=20pt{
U(XX')=(UX )(UX') & 
\ar[l]_-{\raisebox{10pt}{${}_{U(ff')=(Uf)(Uf')}$}} U(AA')=U(A)U(A')
 \ar[r]^-{\raisebox{10pt}{${}_{U(gg')=(Ug)(Ug')} $}} & 
U(YY')=(UY)(UY')}
\in {\mathcal S'}.
$$
By definition this is equivalent to  
$\xymatrix@C=15pt{
XX' & \ar[l]_-{ff'}A A' \ar[r]^-{gg'} & YY'}
\in {\mathcal S}$, proving the multiplicativity of $\mathcal S$.

In particular, consider a monoidal category $\mathsf M$ and a class $\mathcal S'$ of spans in $\mathsf M$. Take $\mathsf C$ to be the category of monoids in $\mathsf M$ and  $\mathcal S$ to be the class containing precisely those spans in $\mathsf C$ whose image under the forgetful functor $U:\mathsf C \to \mathsf M$ belongs to $\mathcal S'$. 
\begin{itemize}
\item[{(1)}] If $\mathcal S'$ is admissible then so is $\mathcal S$.
\item[{(2)}] Assume that $\mathsf M$ is a braided monoidal category (so that also $\mathsf C$ is monoidal and $U$ is strict monoidal). If  $\mathcal S'$ is monoidal then so is $\mathcal S$.
\end{itemize}
\end{example}

\begin{definition}\label{def:legs_in_S}
For any class $\mathcal S$ of spans in some category $\mathsf C$ we say that a
cospan 
$\xymatrix@C=12pt{A \ar[r]^-f & B & \ar[l]_-g C}$
{\em has legs in $\mathcal S$} if 
$\xymatrix@C=12pt{A & \ar@{=}[l] A \ar[r]^-f & B}$ and
$\xymatrix@C=12pt{B & \ar[l]_-g C \ar@{=}[r] & C}$ belong to $\mathcal S$.

A span $\xymatrix@C=12pt{B &\ar[l]_-g A \ar[r]^-f & B}$ (with equal objects at
the left and the right) is said to {\em have its legs in $\mathcal S$} if the
cospan  
$\xymatrix@C=12pt{A \ar[r]^-f & B & \ar[l]_-g A}$
has legs in $\mathcal S$.
\end{definition}


\section{Relative pullbacks}
\label{sec:rel_pullback}

\begin{definition} \label{def:S-pullback}
Consider an admissible class $\mathcal S$ of spans in an arbitrary category $\mathsf C$. For some morphisms 
$\xymatrix@C=12pt{
A  \ar[r]^-a & B & \ar[l]_-c C}$
in $\mathsf C$ the {\em relative pullback} with respect to $\mathcal S$ --- if it exists --- is a span 
$\xymatrix@C=15pt{
A & \ar[l]_-{p_A} A \coten B C \ar[r]^-{p_C} & C}$
belonging to $\mathcal S$ such that the following properties hold (see diagram
\eqref{eq:S-pullback} below).
\begin{itemize}
\item[{(i)}] $a.p_A=c.p_C$
\item[{(ii)}] {\em Universality}: for any 
$\xymatrix@C=15pt{
A & \ar[l]_-{f} X \ar[r]^-{g} & C}
\in \mathcal S$ such that $a.f=c.g$, there is a unique morphism 
$\xymatrix@C=12pt{
X \ar[r]^-h & A \coten B C}$  in $\mathsf C$ which satisfies $p_A.h=f$ and $p_C.h=g$.
\item[{(iii)}] {\em Reflection:} if both 
$\xymatrix@C=12pt{
A  &
\ar[l]_-{p_A} A \coten B C & 
\ar[l]_-f D \ar[r]^-g & 
E}$
and
$\xymatrix@C=12pt{
C  &
\ar[l]_-{p_C} A \coten B C & 
\ar[l]_-f D \ar[r]^-g &
E}$ belong to $\mathcal S$, then also 
$\xymatrix@C=12pt{
A \coten B C & 
\ar[l]_-f D \ar[r]^-g &
E}$ belongs to $\mathcal S$; and symmetrically, if both 
$\xymatrix@C=12pt{
E &
\ar[l]_-g D \ar[r]^-f &
A \coten B C \ar[r]^-{p_A} &
A}$
and
$\xymatrix@C=12pt{
E &
\ar[l]_-g D \ar[r]^-f &
A \coten B C \ar[r]^-{p_C} &
C}$ belong to $\mathcal S$, then also 
$\xymatrix@C=12pt{
E &
\ar[l]_-g D \ar[r]^-f &
A \coten B C}$ belongs to $\mathcal S$.
\end{itemize}
\begin{equation}\label{eq:S-pullback}
\xymatrix@C=10pt@R=10pt{
X \ar@{-->}[rd]^-{!h} \ar@/^1.1pc/[rrrd]^-g \ar@/_1.1pc/[rddd]_-f \\
& A \coten B C \ar[rr]^-{p_C} \ar[dd]_-{p_A} &&
C \ar@{..>}[dd]^-{c} \\
\\
& A \ar@{..>}[rr]_-{a} &&
B}
\end{equation}
\end{definition}

By property (PRE) of $\mathcal S$, part (ii) of Definition
\ref{def:S-pullback} implies that
$\xymatrix@C=15pt{
A & \ar[l]_-{p_A} A \coten B C \ar[r]^-{p_C} & C}$
are joint monomorphisms. Therefore the $\mathcal S$-relative pullback is unique
up-to isomorphism whenever it exists.

\begin{example} \label{ex:pullback}
If $\mathcal S$ is the class of all pullbacks in some category $\mathsf C$, then $\mathcal S$-relative pullbacks are just usual pullbacks. 
\end{example}

\begin{example} \label{ex:CoMon-pullback}
As in Example \ref{ex:CoMonS}, let $\mathsf C$ be the category of comonoids in
a monoidal category $\mathsf M$. 
Assume that $\mathsf M$ has equalizers which are preserved by taking the
monoidal product with any object. Then in $\mathsf C$ any parallel morphisms
$\xymatrix@C=12pt{A\ar@<2pt>[r]^-f \ar@<-2pt>[r]_-g & B}$
have an equalizer; computed as the equalizer
$$
\xymatrix{
E \ar[r]^-j &
A \ar@<2pt>[rr]^-{\widehat f:=1f1.\delta 1.\delta}
\ar@<-2pt>[rr]_-{\widehat g:=1g1.\delta 1.\delta} &&
ABA}
$$
in $\mathsf M$ (where $\delta$ stands for the comultiplication of $A$); see
\cite{AndDev}. Clearly, any comonoid morphism of codomain $A$ equalizes $f$
and $g$ if and only if it equalizes $\widehat f$ and $\widehat g$. So in order
to prove that $E$ is the equalizer of $f$ and $g$ in $\mathsf C$, we need to
equip $E$ with a comonoid structure so that $e$ becomes a comonoid
(mono)morphism. 

The counit is
$\xymatrix@C=12pt{E \ar[r]^-j & A\ar[r]^-\varepsilon & I}$ (where
$\varepsilon$ stands for the counit of $A$). The comultiplication is
constructed in two steps. First the universality of the equalizer in $\mathsf M$ in the
bottom row of the first serially commutative diagram below is used to
construct an auxiliary morphism $\delta_r$; and then the comultiplication
$\delta$ is constructed using the universality of the equalizer in $\mathsf M$ in the bottom
row of the second serially commutative diagram in
$$
\xymatrix{
E \ar[r]^-j \ar@{-->}[d]_-{\delta_r} &
A \ar@<2pt>[r]^-{\widehat f}\ar@<-2pt>[r]_-{\widehat g} \ar[d]^-\delta &
ABA \ar[d]^-{11\delta} \\
EA \ar[r]^-{j1}  &
A^2 \ar@<2pt>[r]^-{\widehat f1}\ar@<-2pt>[r]_-{\widehat g1}  &
ABA^2 }\qquad \qquad
\xymatrix{
E \ar[r]^-j \ar@{-->}[d]_-{\delta} \ar[rd]^-{\delta_r} &
A \ar@<2pt>[r]^-{\widehat f}\ar@<-2pt>[r]_-{\widehat g} &
ABA \ar[d]^-{\delta 11} \\
E^2 \ar[r]^-{1j}  &
EA \ar@<2pt>[r]^-{j \widehat f}\ar@<-2pt>[r]_-{j \widehat g}  &
A^2BA .}
$$

Assume furthermore that $\mathsf M$ is a braided monoidal category so that
$\mathsf C$ inherits the monoidal structure of $\mathsf M$ (cf. Example
\ref{ex:CoMonS}). Any comonoid
morphisms   
$\xymatrix@C=12pt{
A  \ar[r]^-f & B & \ar[l]_-g C}$
induce comonoid morphisms 
$\xymatrix@C=12pt{
AC \ar@<2pt>[r]^-{f\varepsilon} \ar@<-2pt>[r]_-{\varepsilon g} & B}$
(where $\varepsilon$ stands for both counits of $A$ and $C$). So we can take
their equalizer 
\begin{equation}\label{eq:comonoid_eq}
\xymatrix@C=15pt{
A \coten B C \ar[r]^-j &
AC \ar@<2pt>[r]^-{f\varepsilon}
\ar@<-2pt>[r]_-{\varepsilon g} &
B}
\end{equation}
in $\mathsf C$. Below we claim that it gives in fact the pullback
\begin{equation}\label{eq:coten-pullback}
\xymatrix@C=12pt@R=12pt{
A \coten B C \ar[r]^-j \ar[d]_-j & AC \ar[r]^-{\varepsilon 1} & C \ar[dd]^-g \\
AC \ar[d]_-{1\varepsilon} \\
A \ar[rr]_-f &&
B}
\end{equation}
relative to the admissible class $\mathcal S$ in Example
\ref{ex:CoMonS} of spans in $\mathsf C$.

The square of \eqref{eq:coten-pullback} commutes since \eqref{eq:comonoid_eq}
is a fork.
The span 
$\xymatrix{A & \ar[l]_-{1\varepsilon.j} A\coten B C \ar[r]^-{\varepsilon 1.j} & C}$
belongs to $\mathcal S$ since
$1\varepsilon \varepsilon 1.jj.\delta=j$ is a comonoid morphism by construction.
In order to check the universality of \eqref{eq:coten-pullback}, take a span 
$\xymatrix@C=12pt{
A & \ar[l]_-k D \ar[r]^-l & C }$
in $\mathcal S$ for which $f.k=g.l$. 
Then $f\varepsilon.kl.\delta=f.k=g.l=\varepsilon g.kl.\delta$. Thus
since $kl.\delta$ is a comonoid morphism by assumption,
a filler $h$ of the first diagram in
\begin{equation} \label{eq:pullback_vs_equalizer}
\xymatrix@C=12pt@R=12pt{
D \ar@/^1.1pc/[rrrd]^-l \ar@/_1.1pc/[rddd]_-k \ar@{-->}[rd]^-h \\
& A \coten B C \ar[r]_-j \ar[d]^-j &
AC \ar[r]_-{\varepsilon 1} &
C \ar[dd]^-g \\
& AC \ar[d]^-{1\varepsilon} \\
& A \ar[rr]_-f &&
B}\qquad \qquad 
\raisebox{-30pt}{$
\xymatrix@C=40pt @R=40pt{
D \ar[r]^-\delta \ar@{-->}[d]_-h &
D^2 \ar[d]^-{kl} \\
A \coten B C \ar[r]_-j &
AC \ar@<2pt>[r]^-{f\varepsilon} \ar@<-2pt>[r]_-{\varepsilon g} &
B}$}
\end{equation}
is constructed via the universality of the equalizer in $\mathsf C$ in the
bottom row of the second diagram. 
It is a comonoid morphism by construction. 
The uniqueness of a comonoid morphism $h$
rendering commutative the first diagram of \eqref{eq:pullback_vs_equalizer}
follows by the observation that any comonoid morphism $h$ making the first
diagram commute, renders commutative also the second diagram of
\eqref{eq:pullback_vs_equalizer} by the commutativity of  
$$
\xymatrix@R=15pt{
D \ar[rr]^-\delta \ar[d]_-h &&
D^2 \ar[d]_-{hh} \ar@/^1.1pc/[rdd]^-{kl} \\
A \coten B C \ar[rr]^-\delta \ar[d]_-j &&
(A \coten B C)^2 \ar[d]_-{jj} \\
AC \ar[r]^-{\delta\delta} \ar@/_1.1pc/@{=}[rrr] &
A^2C^2 \ar[r]^-{1c1} & 
(AC)^2 \ar[r]^-{1\varepsilon \varepsilon 1} &
AC.}
$$
For the reflection property on the left, assume that 
\begin{equation} \label{eq:CoMon-refl}
\xymatrix@C=12pt{A & \ar[l]_-{1\varepsilon} AC & \ar[l]_-j A\coten B C & \ar[l]_-k D \ar[r]^-l & E} \in \mathcal S
\qquad \textrm{and} \qquad
\xymatrix@C=12pt{C & \ar[l]_-{\varepsilon 1} AC & \ar[l]_-j A\coten B C & \ar[l]_-k D \ar[r]^-l & E}\in \mathcal S.
\end{equation}
Then the diagram of Figure \ref{fig:CoMon-refl} commutes (the region marked by (1) commutes by the first condition, and the region marked by (2) commutes by the second condition of \eqref{eq:CoMon-refl}). Since the right column and the bottom row of the diagram of Figure \ref{fig:CoMon-refl} are equal monomorphisms, this proves the equality of the left column and the top row; that is,
$\xymatrix@C=12pt{A\coten B C & \ar[l]_-k D \ar[r]^-l & E} \in \mathcal S$.
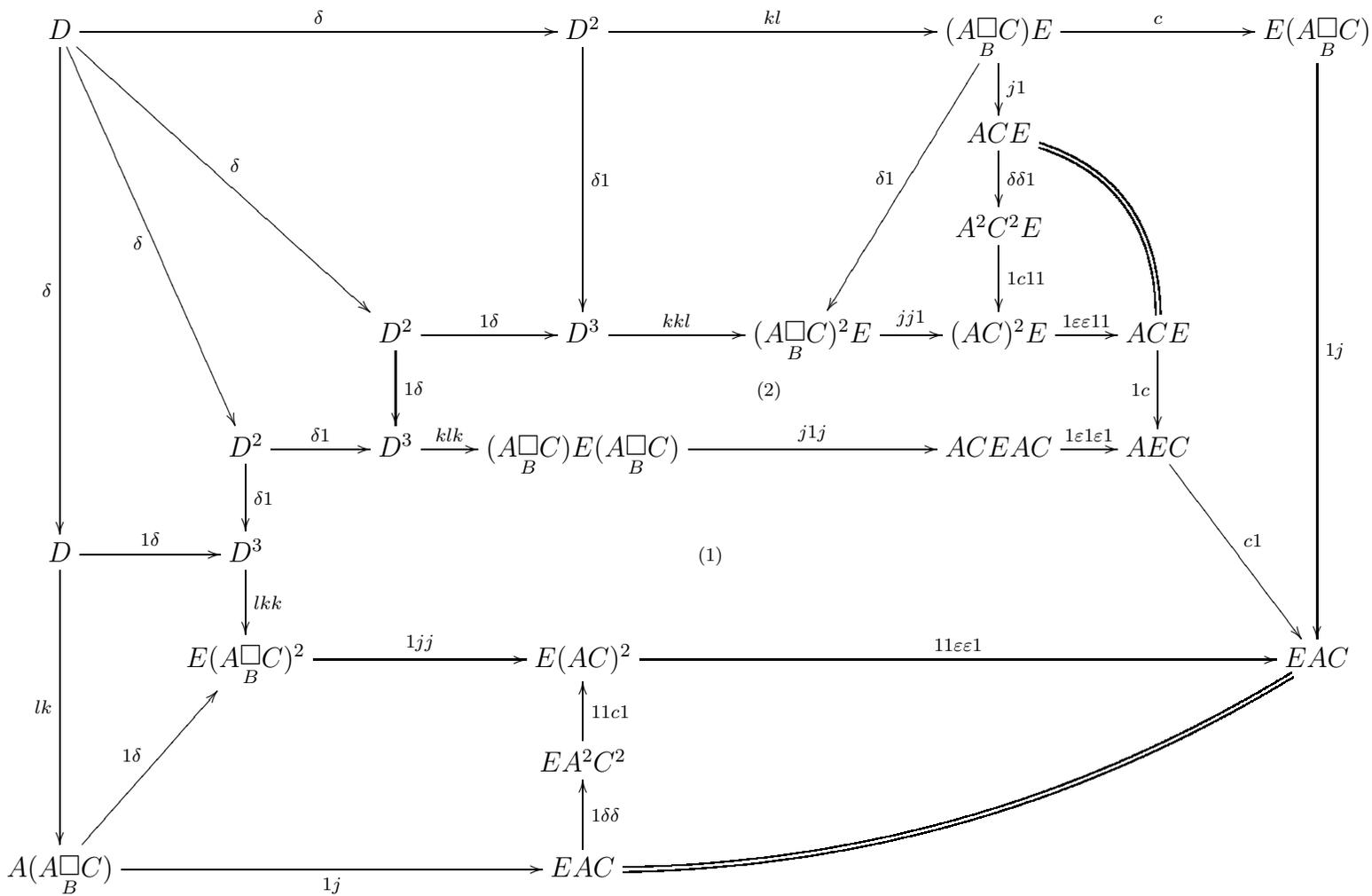
\begin{figure} 
\centering
\begin{sideways}
$\xymatrix{
D \ar[rrr]^-\delta \ar[rrddd]^-\delta \ar[rdddd]_-\delta \ar[ddddd]_-\delta &&&
D^2 \ar[rr]^-{kl} \ar[ddd]^-{\delta 1} &&
(A\coten B C)E\ar[lddd]_-{\delta 1} \ar[d]^-{j1} \ar[rr]^-c &&
E(A\coten B C) \ar[dddddd]^-{1j} \\
&&&&&
ACE \ar[d]^-{\delta \delta 1} \ar@{=}@/^2pc/[rdd] \\
&&&&&
A^2C^2E\ar[d]^-{1c11} \\
&&
D^2 \ar[r]^-{1\delta} \ar[d]^-{1 \delta} \ar@{}[rrrrd]|-{(2)}&
D^3 \ar[r]^-{kkl} &
(A\coten B C)^2E\ar[r]^-{jj1} &
(AC)^2E \ar[r]^-{1\varepsilon\varepsilon 11} &
ACE \ar[d]_-{1c} \\
&
D^2 \ar[r]^-{\delta 1} \ar[d]^-{\delta 1} \ar@{}[rrrrrdd]|-{(1)}&
D^3 \ar[r]^-{klk} &
(A\coten B C) E (A\coten B C) \ar[rr]^-{j1j} &&
ACEAC \ar[r]^-{1\varepsilon 1 \varepsilon 1} &
AEC \ar[rdd]^-{c1} \\
D \ar[r]^-{1\delta} \ar[ddd]_-{lk} &
D^3 \ar[d]^-{lkk} \\
&
E (A\coten B C)^2 \ar[rr]^-{1jj} &&
E(AC)^2 \ar[rrrr]^-{11\varepsilon \varepsilon 1} &&&&
EAC \\
&&&
EA^2C^2 \ar[u]_-{11c1} \\
A(A\coten B C)\ar[ruu]^-{1\delta} \ar[rrr]_-{1j} &&&
EAC \ar[u]_-{1\delta \delta} \ar@{=}@/_2pc/[rrrruu]}$
\end{sideways}
\caption{Reflection property}
\label{fig:CoMon-refl}
\end{figure}
A symmetrical reasoning verifies the reflection property on the right.

Summarizing, we proved that the pullback relative to the class $\mathcal
S$ in Example \ref{ex:CoMonS} of spans in $\mathsf C$ exists for any comonoid morphisms 
$\xymatrix@C=12pt{A \ar[r]^-f & B & \ar[l]_-g C}$.
It is computed as the equalizer in $\mathsf C$ of the comonoid morphisms 
$\xymatrix@C=15pt{
AC \ar@<2pt>[r]^-{f\varepsilon}\ar@<-2pt>[r]_-{\varepsilon g} & B}$.

Recall that in \cite[Definition 5]{Villanueva} for the comonoid morphisms 
$\xymatrix@C=12pt{A \ar[r]^-f & B & \ar[l]_-g C}$
it is assumed that 
$\xymatrix@C=12pt{A & \ar@{=}[l] A \ar[r]^-f & B}$ and
$\xymatrix@C=12pt{B & \ar[l]_-g C \ar@{=}[r] & C}$ belong to the class $\mathcal
S$ in Example \ref{ex:CoMonS} of spans in $\mathsf C$. Under this assumption the $\mathcal
S$-relative pullback \eqref{eq:coten-pullback} becomes isomorphic to the
so-called {\em cotensor product}; defined as the equalizer  
\begin{equation} \label{eq:cotensor}
\xymatrix@C=15pt{
A \coten B C \ar[r]^-j &
AC \ar@<2pt>[rr]^-{1f1.\delta 1}  \ar@<-2pt>[rr]_-{1g1.1\delta } &&
ABC}
\end{equation}
in $\mathsf M$, thanks to the serially commutative diagrams
$$
\xymatrix@C=45pt{
AC \ar@<2pt>[r]^-{\widehat{f\varepsilon}}\ar@<-2pt>[r]_-{\widehat{\varepsilon g}} \ar@{=}[d] &
ACBAC \ar[d]^-{1\varepsilon 1 \varepsilon 1} \\
AC \ar@<2pt>[r]^-{1f1.\delta 1}\ar@<-2pt>[r]_-{1g1.1\delta}  &
ABC}\qquad \qquad
\xymatrix@C=45pt{
AC \ar@<2pt>[r]^-{1f1.\delta 1}\ar@<-2pt>[r]_-{1g1.1\delta}  \ar@{=}[d] &
ABC \ar[d]^-{1c_{A,CB}1.11c1.\delta 1 \delta} \\
AC \ar@<2pt>[r]^-{\widehat{f\varepsilon}}\ar@<-2pt>[r]_-{\widehat{\varepsilon g}} &
ACBAC .}
$$

The current example can be considered in the particular situation when
$\mathsf M$ is a Cartesian symmetric monoidal category. Then
the class $\mathcal S$ of spans in Example \ref{ex:CoMonS} is the class of all
spans in $\mathsf C \cong \mathsf M$ and thus $\mathcal S$-relative pullbacks
are just usual pullbacks; see Example \ref{ex:pullback}. 
\end{example}

\begin{lemma} \label{lem:p_preserves_S}
For any admissible class $\mathcal S$ of spans in an arbitrary category
take an $\mathcal S$-relative pullback \eqref{eq:S-pullback}. The
following assertions hold.
\begin{itemize}
\item[{(1)}] If
$\xymatrix@C=12pt{A' & \ar[l]_-a A \ar@{=}[r] & A}\in \mathcal S$ then also
$\xymatrix@C=12pt{A' & \ar[l]_-a A & \ar[l]_-{p_A} A \coten B C \ar@{=}[r] & 
A \coten B C}\in \mathcal S$.
\item[{(2)}] If
$\xymatrix@C=12pt{C \ar@{=}[r] & C  \ar[r]^-c & C'}\in \mathcal S$ then also
$\xymatrix@C=12pt{A \coten B C \ar@{=}[r] & A \coten B C \ar[r]^-{p_C} &
C \ar[r]^-c & C'}\in \mathcal S$.
\end{itemize}
\end{lemma}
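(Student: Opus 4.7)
The plan is to derive both assertions by applying the two reflection clauses in Definition 3.1(iii) to spans whose apex is the relative pullback object $A\coten B C$ itself, with one leg being the identity. The key insight is that reflection reduces the question of $\mathcal S$-membership of such a span to the $\mathcal S$-membership of two spans obtained by postcomposing the non-identity leg with the projections $p_A$ and $p_C$; and each of those two auxiliary spans will be obtainable either by applying (POST) to the defining span of the relative pullback, or by applying (PRE) to the hypothesis of the lemma.

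For part (1), I would consider the span with apex $A\coten B C$, left leg $a.p_A:A\coten B C\to A'$ and right leg $1:A\coten B C\to A\coten B C$. Since the leg into $A\coten B C$ is on the right, I would invoke the symmetric form of the reflection clause (the second one). This requires checking that
$\xymatrix@C=12pt{A' & \ar[l]_-{a.p_A} A\coten B C \ar[r]^-{p_A} & A}$ and
$\xymatrix@C=12pt{A' & \ar[l]_-{a.p_A} A\coten B C \ar[r]^-{p_C} & C}$
both lie in $\mathcal S$. The second follows by applying (POST) to the defining span $\xymatrix@C=12pt{A & \ar[l]_-{p_A} A\coten B C \ar[r]^-{p_C} & C}\in\mathcal S$ with the postcomposition $a$ on the left. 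The first follows by applying (PRE) to the hypothesis $\xymatrix@C=12pt{A' & \ar[l]_-a A \ar@{=}[r] & A}\in\mathcal S$ with $h:=p_A$, which by the definition of (PRE) produces exactly the desired span (noting $1.p_A=p_A$).

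Part (2) is entirely symmetric: I would apply the first (unsymmetric) form of the reflection clause to the span with apex $A\coten B C$, left leg $1$ and right leg $c.p_C$. The two premises it requires are $\xymatrix@C=12pt{A & \ar[l]_-{p_A} A\coten B C \ar[r]^-{c.p_C} & C'}\in\mathcal S$, obtained from the defining span by (POST) with $g':=c$, and $\xymatrix@C=12pt{C & \ar[l]_-{p_C} A\coten B C \ar[r]^-{c.p_C} & C'}\in\mathcal S$, obtained from the hypothesis $\xymatrix@C=12pt{C \ar@{=}[r] & C \ar[r]^-c & C'}\in\mathcal S$ by (PRE) with $h:=p_C$.

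There is no real obstacle here; the proof is essentially bookkeeping. The only thing to be careful about is matching up the exact forms of the (POST), (PRE) and reflection clauses to the spans we want, in particular choosing the correct side (symmetric vs.\ unsymmetric reflection) depending on whether the identity leg is on the left or on the right.
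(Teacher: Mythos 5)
Your proposal is correct and follows essentially the same route as the paper's proof: apply (PRE) to the hypothesis along the relevant projection, apply (POST) to the defining span $\xymatrix@C=12pt{A & \ar[l]_-{p_A} A \coten B C \ar[r]^-{p_C} & C}\in\mathcal S$, and then conclude by the appropriate reflection clause applied to the span with apex $A\coten B C$ and identity leg. The paper only writes out part (1) and declares part (2) analogous, which is exactly the symmetric argument you spell out.
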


\begin{proof}
We only prove part (1), part (2) follows analogously. 
By assumption the span 
$\xymatrix@C=12pt{A' & \ar[l]_-a A \ar@{=}[r] & A}$ is in $\mathcal S$
hence by (PRE)
$$
\xymatrix@C=12pt{A' & \ar[l]_-a A & \ar[l]_-{p_A} A \coten B C \ar[r]^-{p_A}
  & A}\in \mathcal S.
$$ 
By construction
$\xymatrix@C=12pt{A & \ar[l]_-{p_A} A \coten B C \ar[r]^-{p_C}
  & C}\in \mathcal S$. Then by (POST)
$$
\xymatrix@C=12pt{A' & \ar[l]_-a A & \ar[l]_-{p_A} A \coten B C \ar[r]^-{p_C}
  & C}\in \mathcal S.
$$
By the reflection property of $A\coten B C$ the displayed conditions imply the
claim.
\end{proof}

\begin{proposition}\label{prop:S-pullback_morphisms}
Let $\mathcal S$ be an admissible class of spans in an arbitrary category. Consider $\mathcal S$-relative pullbacks
$$
\xymatrix{
A \coten B C \ar[r]^-{p_C} \ar[d]_-{p_A} &
C \ar[d]^-g \\
A \ar[r]_-f &
B}
\qquad 
\xymatrix{
A' \coten {B'} C' \ar[r]^-{p_{C'}} \ar[d]_-{p_{A'}} &
C' \ar[d]^-{g'} \\
A' \ar[r]_-{f'} &
B'}
\qquad 
\xymatrix{
A^{\prime\prime} \coten {B^{\prime\prime}} C^{\prime\prime} \ar[r]^-{p_{C^{\prime\prime}}} \ar[d]_-{p_{A^{\prime\prime}}} &
C^{\prime\prime} \ar[d]^-{g^{\prime\prime}} \\
A^{\prime\prime} \ar[r]_-{f^{\prime\prime}} &
B^{\prime\prime}.}
$$
\begin{itemize}
\item[{(1)}] 
For any morphisms
$\xymatrix@C=15pt{A \ar[r]^-a & A'}$, $\xymatrix@C=15pt{B \ar[r]^-b & B'}$ and
$\xymatrix@C=15pt{C \ar[r]^-c & C'}$ such that $b.f=f'.a$ and $b.g=g'.c$,
there is a unique morphism $a\morcoten c$ rendering commutative
$$
\xymatrix@C=15pt@R=15pt{
A\coten B C \ar[rr]^-{p_C} \ar[dd]_-{p_A} \ar@{-->}[rd]^-{a\diagcoten c} &&
C \ar[d]^-c \\
& A' \coten {B'} C' \ar[r]^-{p_{C'}} \ar[d]_-{p_{A'}} &
C'\ar[d]^-{g'} \\
A \ar[r]_-a &
A' \ar[r]_-{f'} &
B'.}
$$
\item[{(2)}] The operation $\Box$ of part (1) is functorial in the sense that for any further morphisms 
$\xymatrix@C=15pt{A' \ar[r]^-{a'} & A^{\prime\prime}}$, $\xymatrix@C=15pt{B'
  \ar[r]^-{b'} & B^{\prime\prime}}$ and $\xymatrix@C=15pt{C' \ar[r]^-{c'} &
  C^{\prime\prime}}$ such that $b'.f'=f^{\prime\prime}.a'$ and
  $b'.g'=g^{\prime\prime}.c'$, the equality  
$(a' \morcoten c').(a\morcoten c)=a'.a \morcoten c'.c$ holds.
\end{itemize}
\end{proposition}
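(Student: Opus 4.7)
The plan is to obtain $a \morcoten c$ by a direct application of the universal property (part (ii) of Definition \ref{def:S-pullback}) to the $\mathcal S$-relative pullback $A' \coten{B'} C'$, using the span formed by $a.p_A$ and $c.p_C$ out of $A \coten B C$. For this to apply, two preliminary checks are needed: that this span lies in $\mathcal S$, and that it equalizes $f'$ and $g'$.

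For membership in $\mathcal S$: by construction the span $\xymatrix@C=12pt{A & \ar[l]_-{p_A} A \coten B C \ar[r]^-{p_C} & C}$ belongs to $\mathcal S$, so property (POST) of Definition \ref{def:admissibleS} applied with post-compositions $a$ on the left and $c$ on the right gives $\xymatrix@C=15pt{A' & \ar[l]_-{a.p_A} A\coten B C \ar[r]^-{c.p_C} & C'} \in \mathcal S$. The compatibility $f'.a.p_A = c.p_C$... rather $f'.(a.p_A) = b.f.p_A = b.g.p_C = g'.(c.p_C)$ follows from the hypotheses $b.f=f'.a$, $b.g=g'.c$, together with property (i) of the relative pullback $A \coten B C$. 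Universality then yields a unique morphism $a \morcoten c \colon A\coten B C \to A'\coten{B'} C'$ satisfying $p_{A'}.(a\morcoten c) = a.p_A$ and $p_{C'}.(a\morcoten c) = c.p_C$, which is exactly the commutativity required in part (1).

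For part (2), I would argue by uniqueness: the composite $(a'\morcoten c').(a\morcoten c)$ and the morphism $a'.a \morcoten c'.c$ are both candidates for the universal factorization associated with the data $(a'.a, b'.b, c'.c)$ through $A^{\prime\prime}\coten{B^{\prime\prime}}C^{\prime\prime}$. Indeed,
\[
p_{A^{\prime\prime}}.(a'\morcoten c').(a\morcoten c) = a'.p_{A'}.(a\morcoten c) = a'.a.p_A,
\]
and symmetrically $p_{C^{\prime\prime}}.(a'\morcoten c').(a\morcoten c) = c'.c.p_C$, which are precisely the equations that characterize $a'.a \morcoten c'.c$ uniquely by part (1). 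Hence the two morphisms agree.

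There is no serious obstacle: the whole proposition is a routine consequence of the universal property, and the only point requiring attention is the verification that the induced span $\xymatrix@C=15pt{A' & \ar[l]_-{a.p_A} A\coten B C \ar[r]^-{c.p_C} & C'}$ lies in $\mathcal S$, which is exactly where the admissibility axiom (POST) is used. Functoriality in part (2) is then automatic from the uniqueness clause in the universal property.
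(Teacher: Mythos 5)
Your proposal is correct and follows essentially the same route as the paper: part (1) is obtained by applying the universal property of $A'\coten{B'}C'$ to the span $(a.p_A,\,c.p_C)$, whose membership in $\mathcal S$ is secured by (POST) and whose compatibility $f'.a.p_A=g'.c.p_C$ follows from the hypotheses together with $f.p_A=g.p_C$; part (2) follows from the uniqueness clause exactly as you argue. No gaps.
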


\begin{proof}
(1) By construction 
$\xymatrix@C=12pt{A & \ar[l]_-{p_A} A \coten B C \ar[r]^-{p_C} & C}$ belongs
to $\mathcal S$. Hence by property (POST), also
$\xymatrix@C=12pt{A' & \ar[l]_-a A & \ar[l]_-{p_A} A \coten B C \ar[r]^-{p_C}
  & C \ar[r]^-c & C'}$ belongs to $\mathcal S$.
Therefore the stated morphism $a\morcoten c$ exists by the universality of the
$\mathcal S$-relative pullback $A' \coten {B'} C'$ and the commutativity of
$$
\xymatrix@R=15pt@C=30pt{
A \coten B C \ar[d]_-{p_A} \ar[r]^-{p_C} &
C \ar[d]^-g \ar[r]^-c &
C' \ar[dd]^-{g'} \\
A \ar[d]_-a \ar[r]^-f &
B \ar[rd]^-b \\
A' \ar[rr]_-{f'} &&
B'.}
$$

(2) Both morphisms $(a' \morcoten c').(a\morcoten c)$ and $a'.a \morcoten c'.c$
render commutative the same diagram
$$
\xymatrix@C=15pt@R=12pt{
A \coten B C \ar[rrr]^-{p_C} \ar[ddd]_-{p_A} \ar@{-->}[rrdd] &&&
C \ar[d]^-c \\
&&& C' \ar[d]^-{c'} \\
&& A^{\prime\prime} \coten {B^{\prime\prime}} C^{\prime\prime}
\ar[r]^-{p_{C^{\prime\prime}}} \ar[d]_-{p_{A^{\prime\prime}}} &
C^{\prime\prime} \ar[d]^-{g^{\prime\prime}} \\
A \ar[r]_-a &
A'\ar[r]_-{a'} &
A^{\prime\prime} \ar[r]_-{f^{\prime\prime}}&
B^{\prime\prime}.}
$$
Hence they are equal by the universality of $A^{\prime\prime} \coten {B^{\prime\prime}} C^{\prime\prime}$.
\end{proof}

\begin{proposition} \label{prop:S-pullback_unital&associative}
For any admissible class $\mathcal S$ of spans in an arbitrary category, 
the following assertions hold.
\begin{itemize}
\item[{(1)}] If 
$\xymatrix@C=12pt{A\ar@{=}[r] & A \ar[r]^-{f} & B}\in \mathcal S$ then the
first diagram below is an $\mathcal S$-relative pullback and if 
$\xymatrix@C=12pt{B & \ar[l]_-g C\ar@{=}[r] & C}\in \mathcal S$
then the second diagram is so.
$$
\xymatrix{
A \ar[r]^-f \ar@{=}[d] &
B \ar@{=}[d] \\
A \ar[r]_-f & B}
\qquad \qquad
\xymatrix{
C \ar[d]_-g \ar@{=}[r] &
C \ar[d]^-g \\
B \ar@{=}[r] & 
B}
$$
That is to say, $\xymatrix@C=12pt{A \coten B B\ar[r]^-{p_A} & A}$ and 
$\xymatrix@C=12pt{B\coten B C\ar[r]^-{p_C} & C}$ are isomorphisms. 
\item[{(2)}] Consider morphisms 
$\xymatrix@C=12pt{A\ar[r]^- f & B & \ar[l]_-g C \ar[r]^-h & D & \ar[l]_-k E}$
such that all of the $\mathcal S$-relative pullbacks 
$$
\xymatrix{
A \coten B C \ar[r]^-{p_C} \ar[d]_-{p_A} &
C \ar[d]^-g\\
A \ar[r]_-f &
B} \quad
\xymatrix@R=20pt{
(A \coten B C)\coten D E  \ar[r]^-{p_E} \ar[d]_-{p_{A\diagcoten_B  C}} &
E \ar[d]^-k\\
A \coten B C \ar[r]_-{h.p_C} &
D} \quad
\xymatrix{
C \coten D E \ar[r]^-{p_E} \ar[d]_-{p_C} &
E \ar[d]^-k\\
C \ar[r]_-h &
D} \quad
\xymatrix{
A \coten B (C\coten D E)  \ar[d]_-{p_A} \ar[r]^-{p_{C\diagcoten_D  E}} &
C \coten D E \ar[d]^-{g.p_C} \\
A \ar[r]_-f &
B}
$$
exist. Then $(A\coten B C)\coten D E$ and $A\coten B (C \coten D E)$ are isomorphic.
\item[{(3)}] Consider morphisms 
$\xymatrix@C=12pt{A\ar[r]^- f & B & \ar[l]_-g C}$ such that the first listed
  $\mathcal S$-relative pullback $A\coten B C$ in part (2) exists. Then the
  isomorphisms of part (1) and the isomorphism $(A\coten B B) \coten B C \to
  A\coten B (B \coten B C)$ of part (2) satisfy Mac Lane's triangle condition.
\item[{(4)}] Consider morphisms 
$\xymatrix@C=12pt{A\ar[r] & B & \ar[l]C \ar[r] & D & \ar[l] E \ar[r] & F & \ar[l] G}$
such that all of the  $\mathcal S$-relative pullbacks $A\coten B C \coten D E \coten F G$ with any (hence by part (2) all) possible bracketing exist. Then the isomorphisms of part (2) satisfy Mac Lane's pentagon condition. 
\end{itemize}
\end{proposition}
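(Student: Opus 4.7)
For part (1), I take $A \coten{B} B := A$ with projections $p_A := 1_A$ and $p_B := f$; the defining span $\xymatrix@C=12pt{A & \ar@{=}[l] A \ar[r]^-f & B}$ lies in $\mathcal{S}$ by hypothesis. Universality: given a span $\xymatrix@C=12pt{A & \ar[l]_-k X \ar[r]^-l & B} \in \mathcal{S}$ with $f.k = l$, the morphism $h := k$ is the unique filler. The reflection condition reduces to the hypothesis, since one projection is $1_A$ and the other is $f$. The second diagram is entirely symmetric.

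For part (2), write $P_1 := A \coten{B} C$, $P_2 := P_1 \coten{D} E$, $Q_1 := C \coten{D} E$ and $Q_2 := A \coten{B} Q_1$, and construct $\theta : P_2 \to Q_2$ by iterated use of the universal property. From the defining span of $P_2$, property (POST) yields that the span with legs $p_C p_{P_1} \colon P_2 \to C$ and $p_E \colon P_2 \to E$ lies in $\mathcal{S}$; the universal property of $Q_1$ then produces a unique $\varphi : P_2 \to Q_1$ satisfying $p_C . \varphi = p_C p_{P_1}$ and $p_E . \varphi = p_E$. Next, the span with legs $p_A p_{P_1} \colon P_2 \to A$ and $\varphi \colon P_2 \to Q_1$ must lie in $\mathcal{S}$: by the reflection property of $Q_1$ it suffices to verify membership of the spans $A \leftarrow P_2 \to C$ and $A \leftarrow P_2 \to E$ obtained by post-composing $\varphi$ with $p_C$ and $p_E$, and both follow from (PRE) and (POST) applied to the defining spans of $P_1$ and $P_2$. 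The compatibility $f . p_A p_{P_1} = g p_C . \varphi$ reduces to $f p_A = g p_C$ on $P_1$, so the universal property of $Q_2$ supplies $\theta$. A symmetrical construction yields $\theta' : Q_2 \to P_2$, and the uniqueness clauses in the universal properties force $\theta' \theta = 1$ and $\theta \theta' = 1$, since each composite agrees with the identity after postcomposition with all projections to the corner objects.

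Parts (3) and (4) follow from the uniqueness clauses of the universal properties. Every morphism appearing in the triangle (resp.\ pentagon) is a morphism into an $\mathcal{S}$-relative pullback, and is therefore determined by its composites with the projections onto the corner factors $A, C$ (resp.\ $A, C, E, G$). A direct unwinding shows that both legs of the triangle (resp.\ pentagon) give the same composite with each such projection, and hence coincide.

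The main obstacle is the bookkeeping in part (2): correctly identifying the auxiliary spans that must be checked to belong to $\mathcal{S}$, and discharging each using the appropriate combination of (PRE), (POST), Lemma~\ref{lem:p_preserves_S}, and the reflection property built into Definition~\ref{def:S-pullback}. Once these memberships are in place, both the existence of the isomorphisms $\theta$, $\theta'$ and the verification of the coherence equations in (3) and (4) are forced by universality.
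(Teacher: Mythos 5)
Your proposal is correct and follows essentially the same route as the paper: part (1) by direct inspection, part (2) by using (PRE), (POST) and the reflection property of $C \coten D E$ to establish that the span $A \leftarrow (A\coten B C)\coten D E \to C\coten D E$ lies in $\mathcal S$, then invoking universality in both directions and the joint monomorphy of the projections onto the corner objects, and parts (3)--(4) by the same joint-monomorphy argument. The only cosmetic difference is that the paper verifies the mutual-inverse and coherence identities in two explicit stages (first computing $(p_C\morcoten 1).\widetilde l$) rather than appealing at once to the composites with all corner projections, but this is the same argument.
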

To the question of the existence of the $\mathcal S$-relative pullbacks in
parts (2) and (4) of Proposition \ref{prop:S-pullback_unital&associative} we
shall return in Proposition \ref{prop:iterated_S-pullback}.

\begin{proof}
Part (1) is obvious. For part (2) note that by part (1) of Proposition
\ref{prop:S-pullback_morphisms} the top row of the commutative diagram
$$
\xymatrix@R=15pt@C=40pt{
(A\coten B C)\coten D E \ar[r]^-{p_C \diagcoten 1} \ar[d]_-{p_{A \diagcoten_B C}} &
C \coten D E \ar[d]^-{p_C} \\
A \coten B C \ar[r]^-{p_C} \ar[d]_-{p_A} &
C \ar[d]^-g \\
A \ar[r]_-f &
B}
$$
is well-defined.
By construction 
$$
\xymatrix@C=18pt{
A & \ar[l]_-{p_A} A\coten B C \ar[r]^-{p_C} & C}
\quad \textrm{and}\quad
\xymatrix@C=15pt{
A\coten B C && \ar[ll]_-{p_{A \diagcoten_B C}} (A\coten B C)\coten D E
\ar[r]^-{p_E} & E}
$$ 
belong to $\mathcal S$. Then by properties (PRE) and (POST) of $\mathcal S$,
respectively, also the spans
$$
\xymatrix@R=1pt{
A & 
\ar[l]_-{p_A} A\coten B C  & 
\ar[l]_-{p_{A \diagcoten_B C}} (A\coten B C)\coten D E
\ar[r]^-{p_{A\diagcoten_B C}}  \ar@/_.8pc/[rd]_-{p_C \diagcoten 1} &
A \coten B C \ar[r]^-{p_C} &
C  \\
&&& C \coten D E \ar@/_.8pc/[ru]_-{p_C} \\
\\
A & 
\ar[l]_-{p_A} A\coten B C  & 
\ar[l]_-{p_{A \diagcoten_B C}} (A\coten B C)\coten D E \ar[rr]^-{p_E}
\ar@/_.8pc/[rd]_-{p_C \diagcoten 1} &&
E\\
&&& C \coten D E \ar@/_.8pc/[ru]_-{p_E}}
$$
belong to $\mathcal S$. Hence we conclude by the reflection property of $C
\coten D E$ that 
$$
\xymatrix{
A & 
\ar[l]_-{p_A} A\coten B C  & 
\ar[l]_-{p_{A \diagcoten_B C}} (A\coten B C)\coten D E
\ar[r]^-{p_C \diagcoten 1} &
C \coten D E}
$$
belongs to $\mathcal S$. With all that information at hand, there is a unique
morphism $l$ rendering commutative the first diagram of
$$
\xymatrix{
(A\coten B C)\coten D E
\ar[dd]_-{p_{A\diagcoten_B C}}
\ar@/^2pc/[rrd]^-{p_C \diagcoten 1}
\ar@{-->}[rd]^-l \\
& A\coten B (C \coten D E) \ar[r]^-{p_{C\diagcoten_D E}} \ar[d]_-{p_A} &
C \coten D E \ar[d]^-{g.p_C} \\
A\coten B C \ar[r]_-{p_A} &
A \ar[r]_-f &
B}\quad
\xymatrix{
A\coten B (C\coten D E)
\ar[rr]^-{p_{C\diagcoten_D E}}
\ar@/_2pc/[rdd]_-{1 \diagcoten p_C}
\ar@{-->}[rd]^-{\widetilde l} &&
C\coten D E \ar[d]^-{p_E} \\
& (A\coten B C) \coten D E \ar[d]_-{p_{A\diagcoten_B C}} \ar[r]^-{p_E} &
E \ar[d]^-k \\
& A \coten B C \ar[r]_-{h.p_C} &
D}
$$
A symmetric reasoning yields a morphism $\widetilde l$ in the second
diagram. Since their right verticals are joint monomorphisms,
commutativity of both diagrams 
$$
\xymatrix@C=18pt{
& (A\coten B C)\coten D E \ar[r]^-{p_C \diagcoten 1} 
\ar[d]^-{p_{A\diagcoten_B C}} &
C \coten D E \ar[d]^-{p_C} \\
A\coten B (C \coten D E) \ar@/^1.5pc/[ru]^-{\widetilde l}
\ar[r]^-{1 \diagcoten p_C}
\ar@/_1.6pc/[rrd]_-{p_{C\diagcoten_D E}} &
A\coten B C \ar[r]^-{p_C} & 
C \\
&& C\coten D E \ar[u]_-{p_C}}
\quad
\xymatrix@C=18pt{
& (A\coten B C)\coten D E \ar[r]^-{p_C \diagcoten 1} \ar[rd]_-{p_E} &
C \coten D E \ar[d]^-{p_E} \\
A\coten B (C \coten D E) \ar@/^1.5pc/[ru]^-{\widetilde l}
\ar@/_1.6pc/[rrd]_-{p_{C\diagcoten_D E}} && 
E \\
&& C\coten D E \ar[u]_-{p_E}}
$$
proves $(p_C \morcoten 1).\widetilde l=p_{C\diagcoten_D E}$. This is used to
see the commutativity of the second diagram of
$$
\xymatrix@C=12pt{
& (A\coten B C)\coten D E \ar[r]^-l \ar[d]^-{p_{A\diagcoten_B C}} &
A\coten B (C\coten D E) \ar[d]^-{p_A} \\
A\coten B (C\coten D E)\ar[r]^-{1\diagcoten p_C} \ar@/^1.5pc/[ru]^-{\widetilde l}
\ar@{=}@/_1.6pc/[rrd] &
A \coten B C \ar[r]^-{p_A} &
A \\
&& 
A\coten B (C\coten D E) \ar[u]_-{p_A}}
\quad
\xymatrix@C=12pt{
& (A\coten B C)\coten D E \ar[r]^-l \ar[rd]_-{p_C \diagcoten 1} &
A\coten B (C\coten D E)  \ar[d]^-{p_{C\diagcoten_D E}}\\
A\coten B (C\coten D E) \ar@/^1.5pc/[ru]^-{\widetilde l} \ar@{=}@/_1.6pc/[rrd] &&
C \coten D E \\
&& 
A\coten B (C\coten D E). \ar[u]_-{p_{C\diagcoten_D E}}}
$$
Since their right verticals are joint monomorphisms, the commutativity of these
diagrams implies $l.\widetilde l=1$. A symmetric reasoning leads to
$\widetilde l.l=1$ so that $l$ and $\widetilde l$ are mutual inverses. 

(3) Since 
$\xymatrix@C=12pt{A & \ar[l]_-{p_A} A\coten B C \ar[r]^-{p_C} & C}$ are joint monomorphisms, the claim follows by the commutativity of both diagrams below.
$$
\xymatrix@R=25pt@C=15pt{
&& A\coten B C \ar[d]^-{p_A} \\
(A\coten B B)\coten B C \ar[r]^-{p_{A\diagcoten_B B}} \ar@/^1.2pc/[rru]^-{p_A \diagcoten 1} \ar@/_1.1pc/[rd]_-l &
A \coten B B \ar[r]^-{p_A } &
A \\
& A\coten B (B \coten B C) \ar[r]_-{1\diagcoten p_C} \ar[ru]^-{p_A} &
A \coten B C \ar[u]_-{p_A}} 
\quad
\xymatrix@R=9pt@C=10pt{
&&& A\coten B C \ar[d]^-{p_C} \\
(A\coten B B)\coten B C \ar[rrr]^-{p_C} \ar@/^1.2pc/[rrru]^-{p_A \diagcoten 1} \ar@/_1.1pc/[rdd]_-l \ar[rrd]_-{p_B \diagcoten 1} &&&
C  \\
&& B \coten B C \ar[ru]_-{p_C} \\
& A\coten B (B \coten B C) \ar[rr]_-{1\diagcoten p_C} \ar[ru]^-{p_{B \diagcoten_B C }} &&
A \coten B C \ar[uu]_-{p_C}} 
$$
(4) The claim follows by similar standard arguments; using the construction of
$l$ and the fact that 
$$
\xymatrix@R=10pt@C=30pt{
A \coten B (C\coten D (E \coten F G)) \ar[r]^-{p_A} & A\\
A \coten B (C\coten D (E \coten F G)) \ar[rr]^-{p_{C\diagcoten_D (E \diagcoten_F G)}} &&
C\coten D (E \coten F G) \ar[r]^-{p_C} & C \\
A \coten B (C\coten D (E \coten F G)) \ar[rr]^-{p_{C\diagcoten_D (E \diagcoten_F G)}} &&
C\coten D (E \coten F G) \ar[r]^-{p_{E \diagcoten_F G}} &
E \coten F G \ar[r]^-{p_E} & E \\
A \coten B (C\coten D (E \coten F G)) \ar[rr]^-{p_{C\diagcoten_D (E \diagcoten_F G)}} &&
C\coten D (E \coten F G) \ar[r]^-{p_{E \diagcoten_F G}} &
E \coten F G \ar[r]^-{p_G} & G
}
$$
are jointly monic.
\end{proof}

Since $\mathcal S$-relative pullbacks are defined up-to isomorphisms,
Proposition \ref{prop:S-pullback_unital&associative} allows us to pretend that
$\Box$ is associative and omit the parentheses as well as the isomorphisms $l$
in Proposition \ref{prop:S-pullback_unital&associative}.

\begin{proposition} \label{prop:S-pullback_monoid}
For a monoidal admissible class $\mathcal S'$ of spans in a monoidal category
$\mathsf M$, consider an $\mathcal S'$-relative pullback
\begin{equation} \label{eq:S-pullback_monoid}
\xymatrix{
A \coten B C \ar[r]^-{p_C} \ar[d]_-{p_A} &
C \ar[d]^-g \\
A \ar[r]_-f &
B}
\end{equation}
in which $f$ and $g$ are monoid morphisms.
\begin{itemize}
\item[{(1)}] There is a unique monoid structure on $A \coten B C$ such that
  $p_A$ and $p_C$ are monoid morphisms.
\item[{(2)}] The diagram of \eqref{eq:S-pullback_monoid} is an $\mathcal
  S$-relative pullback with respect to the admissible class $\mathcal S$ of
  spans in the category of monoids in $\mathsf M$ defined in Example
  \ref{ex:Mon(M)S}. 
\end{itemize}
\end{proposition}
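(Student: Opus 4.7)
The plan is in two steps: first construct the monoid structure on $A\coten B C$ using the universal property of the $\mathcal{S}'$-relative pullback twice, once for multiplication and once for the unit; then upgrade the pullback itself to the category of monoids by exploiting that $\mathcal{S}$ is, by construction, the preimage of $\mathcal{S}'$ under the forgetful functor (Example~\ref{ex:Mon(M)S}).

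For part (1), the multiplication will come from the candidate pair
$$
\xymatrix@C=18pt{A & (A\coten B C)^2 \ar[l]_-{m.p_Ap_A}\ar[r]^-{m.p_Cp_C} & C.}
$$
These legs are equalized by $(f,g)$ because $f.p_A=g.p_C$ and $f,g$ are monoid morphisms. To apply universality I need this span to lie in $\mathcal{S}'$: starting from the defining $(p_A,p_C)\in\mathcal{S}'$, (MULTIPLICATIVE) yields $(p_Ap_A,p_Cp_C)\in\mathcal{S}'$, and (POST), post-composing with $m$ on both sides, completes the verification. The universal property then delivers a unique $m_{A\coten B C}$ compatible with $p_A$ and $p_C$. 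For the unit, the pair $(u_A,u_C)$ is equalized by $(f,g)$ since $f.u=u=g.u$, and $\xymatrix@C=10pt{A&I\ar[l]_-u\ar[r]^-u&C}\in\mathcal{S}'$ by (UNITAL), so a unique $u_{A\coten B C}$ drops out. The associativity and unit laws for this structure, together with the uniqueness of any monoid structure rendering $p_A,p_C$ monoid morphisms, will be verified after post-composition with $p_A$ and $p_C$ separately---these reduce to the corresponding laws on $A$ and $C$---and the joint monomorphicity of $(p_A,p_C)$, noted immediately after Definition~\ref{def:S-pullback}, converts them into the desired equalities.

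For part (2), the square of~\eqref{eq:S-pullback_monoid} commutes in monoids by (1), and $(p_A,p_C)\in\mathcal{S}$ holds because its image in $\mathsf{M}$ is the defining span of the $\mathcal{S}'$-relative pullback. For universality, given monoid morphisms $x: X \to A$ and $y: X \to C$ with $f.x=g.y$ and $(x,y)\in\mathcal{S}$, the $\mathcal{S}'$-relative pullback in $\mathsf{M}$ produces a unique morphism $h: X \to A\coten B C$ with $p_A.h=x$ and $p_C.h=y$; the main (though mild) obstacle is checking that $h$ is a monoid morphism, which I plan to do by post-composing the desired identities $h.m=m_{A\coten B C}.hh$ and $h.u=u_{A\coten B C}$ with $p_A$ and with $p_C$, using that $x$ and $y$ are monoid morphisms, and invoking joint monomorphicity once more. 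Finally, the reflection property transfers automatically: all relevant spans of monoid morphisms lie in $\mathcal{S}$ if and only if their images lie in $\mathcal{S}'$, where the reflection property of the $\mathcal{S}'$-relative pullback holds by hypothesis.
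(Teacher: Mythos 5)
Your proposal is correct and follows essentially the same route as the paper: the multiplication is induced from the span $(m.p_Ap_A,\, m.p_Cp_C)$ placed in $\mathcal S'$ via (MULTIPLICATIVE) and (POST), the unit from (UNITAL), with associativity, unitality, and the monoid-morphism property of the induced filler all checked against the joint monomorphisms $(p_A,p_C)$, and the reflection property transferred verbatim since $\mathcal S$ is the preimage of $\mathcal S'$ under the forgetful functor. No gaps.
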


\begin{proof}
(1) By construction 
$\xymatrix@C=12pt{A & \ar[l]_-{p_A} A\coten B C \ar[r]^-{p_C} &C}\in \mathcal S$
hence by the multiplicativity of $\mathcal S$
$\xymatrix@C=16pt{A^2 & \ar[l]_-{p_Ap_A} (A\coten B C)^2 \ar[r]^-{p_Cp_C} &C^2}\in
\mathcal S$. Then by (POST)
$\xymatrix@C=16pt{
A & \ar[l]_-m A^2 & \ar[l]_-{p_Ap_A} (A\coten B C)^2 \ar[r]^-{p_Cp_C} &C^2
\ar[r]^-m & A}\in \mathcal S$. Hence by the commutativity of the first diagram
in
\begin{equation} \label{eq:S-pullback_m&u}
\xymatrix@C=24pt@R=24pt{
(A\coten B C)^2 \ar[d]_-{p_Ap_A} \ar[r]^-{p_Cp_C} & 
C^2 \ar[r]^-m \ar[d]^-{gg} &
C \ar[dd]^-g \\
A^2 \ar[r]_-{ff} \ar[d]_-m &
B^2 \ar[rd]_-m \\
C \ar[rr]_-f &&
B}\quad
\xymatrix@C=20pt@R=20pt{
(A\coten B C)^2 \ar[rr]^-{p_Cp_C} \ar[dd]_-{p_Ap_A} \ar@{-->}[rd]^-m &&
C^2 \ar[d]^-m \\
& A\coten B C\ar[r]^-{p_C}\ar[d]_-{p_A} &
C \ar[d]^-g \\
A^2 \ar[r]_-m &
A \ar[r]_-f &
B}
\quad
\xymatrix@C=24pt@R=24pt{
I \ar@/^1.5pc/[rrd]^-u \ar@/_1.5pc/[rdd]_-u \ar@{-->}[rd]^-u & \\
& A\coten B C\ar[r]^-{p_C}\ar[d]_-{p_A} &
C \ar[d]^-g \\
& A \ar[r]_-f &
B}
\end{equation}
there is a unique filler $m$ for the second diagram of
\eqref{eq:S-pullback_m&u}.
By the unitality of $\mathcal S$, the span 
$\xymatrix@C=12pt{A & \ar[l]_-u I \ar[r]^-u & C}$
belongs to $\mathcal S$. Then by $f.u=u=g.u$, there is a unique filler $u$ for
the third diagram of \eqref{eq:S-pullback_m&u}.
By a standard reasoning, associativity and unitality of the monoid $A \coten B
C$ follows from the respective properties of $A$ and $C$ making use of the
fact that 
$\xymatrix@C=12pt{A & \ar[l]_-{p_A} A\coten B C \ar[r]^-{p_C} &C}$
are joint monomorphisms.

(2) The span of monoids 
$\xymatrix@C=12pt{A & \ar[l]_-{p_A} A\coten B C \ar[r]^-{p_C} &C}$
belongs to $\mathcal S$ by definition and the square of
\eqref{eq:S-pullback_monoid} commutes by construction. 
In order to see its universality, take a span of monoids
$\xymatrix@C=12pt{A & \ar[l]_-{a} D  \ar[r]^-c &C}$ in $\mathcal S$ such that
$f.a=g.c$. Then by definition 
$\xymatrix@C=12pt{A & \ar[l]_-{a} D  \ar[r]^-c &C}\in \mathcal S'$.
Since \eqref{eq:S-pullback_monoid} is an $\mathcal S'$-relative pullback in
$\mathsf M$, there is a unique filler $d$ of the diagram
$$
\xymatrix@R=12pt@C=12pt{
D \ar@/^1.2pc/[rrrd]^-c\ar@/_1.2pc/[rddd]_-a \ar@{-->}[rd]^-d \\
& A \coten B C \ar[rr]^-{p_C}\ar[dd]_-{p_A} &&
C \ar[dd]^-g \\
\\
& A \ar[rr]_-f &&
B}
$$
in $\mathsf M$. Using again that 
$\xymatrix@C=12pt{A & \ar[l]_-{p_A} A\coten B C \ar[r]^-{p_C} &C}$
are joint monomorphisms in $\mathcal M$, the morphism $d$ is multiplicative by
the commutativity of
$$
\xymatrix{
& (A \coten B C)^2 \ar[r]^-m \ar[d]^-{p_Ap_A} &
A \coten B C \ar[d]^-{p_A} \\
D^2 \ar@/^1pc/[ru]^-{dd} \ar[r]^-{aa} \ar@/_1pc/[rd]_-m &
A^2 \ar[r]^-m &
A\\
& D \ar[ru]^-a \ar[r]_-d &
A \coten B C \ar[u]_-{p_A}}
\qquad 
\xymatrix{
& (A \coten B C)^2 \ar[r]^-m \ar[d]^-{p_Cp_C} &
A \coten B C \ar[d]^-{p_C} \\
D^2 \ar@/^1pc/[ru]^-{dd} \ar[r]^-{cc} \ar@/_1pc/[rd]_-m &
C^2 \ar[r]^-m &
C\\
& D \ar[ru]^-c \ar[r]_-d &
A \coten B C \ar[u]_-{p_C}}
$$
and unital by
$$
\xymatrix{
&& A \coten B C \ar[d]^-{p_A} \\
I \ar@/^1pc/[rru]^-u \ar[rr]^-u \ar@/_1pc/[rd]_-u &&
A\\
& D \ar[ru]^-a \ar[r]_-d &
A \coten B C \ar[u]_-{p_A}}
\qquad 
\xymatrix{
&& A \coten B C \ar[d]^-{p_C} \\
I \ar@/^1pc/[rru]^-u \ar[rr]^-u \ar@/_1pc/[rd]_-u &&
C\\
& D \ar[ru]^-c \ar[r]_-d &
A \coten B C \ar[u]_-{p_C}}
$$
The reflection property is obviously inherited from $\mathsf M$.
\end{proof}


\section{Relative categories}
\label{sec:rel_cat}

\begin{assumption} \label{ass:S-pullback}
For an admissible class $\mathcal S$ of spans in some category $\mathsf C$ we
make the following assumption:
whenever a cospan
$\xymatrix@C=12pt{A \ar[r]^-f & B & \ar[l]_-g C}$
has legs in $\mathcal S$ (see Definition \ref{def:legs_in_S}), 
there exists their $\mathcal S$-relative pullback 
$\xymatrix@C=12pt{
A & \ar[l]_-{p_A}  A \coten B C \ar[r]^-{p_C} & C}$.
\end{assumption}

\begin{example}
If $\mathcal S$ is the class of all spans in some category $\mathsf C$, then
Assumption \ref{ass:S-pullback} reduces to the assumption that pullbacks exist
in $\mathsf C$.
\end{example}

\begin{example} \label{ex:ass_CoMon}
As in Example \ref{ex:CoMon-pullback}, let $\mathsf C$ be the category of
comonoids in a braided monoidal category $\mathsf M$ in which equalizers exist and are preserved by the monoidal product with any object. 
Then it is proven in Example \ref{ex:CoMon-pullback} that in $\mathsf C$ all pullbacks exist relative to the admissible class $\mathcal S$ in Example \ref{ex:CoMonS} of spans in $\mathsf C$. Thus in particular Assumption \ref{ass:S-pullback} holds for this class $\mathcal S$.
\end{example}

\begin{example} \label{ex:ass_Mon(M)}
Suppose that Assumption \ref{ass:S-pullback} holds for a monoidal admissible
class $\mathcal S'$ of spans in a monoidal category $\mathsf M$. Then it also
holds for the admissible class $\mathcal S$ of spans in the category of monoids in
$\mathsf M$ in Example \ref{ex:Mon(M)S}.

Indeed, if for some monoid morphisms 
$\xymatrix@C=12pt{
A \ar[r]^-f & B & \ar[l]_-g  C}$
the spans 
$\xymatrix@C=12pt{
A \ar@{=}[r] & A \ar[r]^-f & B}$
and
$\xymatrix@C=12pt{
B & \ar[l]_-g C \ar@{=}[r] & C}$ belong to $\mathcal S$, then by definition
they belong to $\mathcal S'$ too. Then by Assumption \ref{ass:S-pullback}
there exists their $\mathcal S'$ relative pullback in $\mathsf M$. And it is
an $\mathcal S$-relative pullback of monoids in $\mathsf M$ by Proposition
\ref{prop:S-pullback_monoid}. 
\end{example}

\begin{proposition}\label{prop:iterated_S-pullback}
If Assumption \ref{ass:S-pullback} holds for some admissible class $\mathcal S$ of spans in
an arbitrary category $\mathsf C$ then all of the $\mathcal S$-relative
pullbacks listed in part (2) of Proposition
\ref{prop:S-pullback_unital&associative} exist provided that the following spans 
belong to $\mathcal S$. 
$$
\xymatrix@C=12pt{
A \ar@{=}[r] & A \ar[r]^-f & B}
\quad
\xymatrix@C=12pt{
B & \ar[l]_-g C \ar@{=}[r] & C}
\quad
\xymatrix@C=12pt{
C \ar@{=}[r] & C \ar[r]^-h & D}
\quad
\xymatrix@C=12pt{
D & \ar[l]_-k E \ar@{=}[r] & E}
$$
\end{proposition}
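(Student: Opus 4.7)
The plan is to get each of the four $\mathcal S$-relative pullbacks in part (2) of Proposition \ref{prop:S-pullback_unital\&associative} by a single application of Assumption \ref{ass:S-pullback}, which requires only that we verify in each case that the relevant cospan has legs in $\mathcal S$ in the sense of Definition \ref{def:legs_in_S}. The two ``outer'' pullbacks $A \coten B C$ and $C \coten D E$ are immediate from the hypothesis, since by assumption the cospans $\xymatrix@C=12pt{A \ar[r]^-f & B & \ar[l]_-g C}$ and $\xymatrix@C=12pt{C \ar[r]^-h & D & \ar[l]_-k E}$ have legs in $\mathcal S$, so Assumption \ref{ass:S-pullback} applies directly.

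The real content is thus to build the two ``iterated'' pullbacks, and the key tool will be Lemma \ref{lem:p_preserves_S}, which tells us that whichever of the canonical projections one post-composes with an $\mathcal S$-morphism, the resulting span remains in $\mathcal S$. Concretely, for $(A \coten B C) \coten D E$, I would consider the cospan $\xymatrix@C=12pt{A \coten B C \ar[r]^-{h.p_C} & D & \ar[l]_-k E}$. Its right leg $\xymatrix@C=12pt{D & \ar[l]_-k E \ar@{=}[r] & E}$ is in $\mathcal S$ by hypothesis, while its left leg $\xymatrix@C=12pt{A\coten B C \ar@{=}[r] & A\coten B C \ar[r]^-{h.p_C} & D}$ is in $\mathcal S$ by part (2) of Lemma \ref{lem:p_preserves_S}, applied to the assumption $\xymatrix@C=12pt{C \ar@{=}[r] & C \ar[r]^-h & D}\in \mathcal S$. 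Hence the cospan has legs in $\mathcal S$ and the pullback exists by Assumption \ref{ass:S-pullback}.

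Symmetrically, for $A \coten B (C \coten D E)$ I would look at the cospan $\xymatrix@C=12pt{A \ar[r]^-f & B & \ar[l]_-{g.p_C} C \coten D E}$. The left leg is $\xymatrix@C=12pt{A \ar@{=}[r] & A \ar[r]^-f & B}\in\mathcal S$ by hypothesis, and the right leg $\xymatrix@C=12pt{B & \ar[l]_-{g.p_C} C\coten D E \ar@{=}[r] & C\coten D E}$ lies in $\mathcal S$ by part (1) of Lemma \ref{lem:p_preserves_S}, applied to the assumption $\xymatrix@C=12pt{B & \ar[l]_-g C \ar@{=}[r] & C}\in \mathcal S$. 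Once again Assumption \ref{ass:S-pullback} yields the desired relative pullback.

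There is no real obstacle; the main point is simply recognising that Lemma \ref{lem:p_preserves_S} is exactly the ``leg-preservation'' statement needed to feed the output of one application of Assumption \ref{ass:S-pullback} into the next. The four assumed spans on $f,g,h,k$ are precisely what make both Lemma \ref{lem:p_preserves_S} and Assumption \ref{ass:S-pullback} applicable at each stage.
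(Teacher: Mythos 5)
Your proposal is correct and follows essentially the same route as the paper: the first and third pullbacks come directly from Assumption \ref{ass:S-pullback}, and the two iterated ones are obtained by applying parts (2) and (1) of Lemma \ref{lem:p_preserves_S} to show the composite legs $h.p_C$ and $g.p_C$ still lie in $\mathcal S$. You merely spell out explicitly the fourth case, which the paper dismisses as ``analogous reasoning''.
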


\begin{proof}
Existence of the $\mathcal S$-relative pullbacks listed first and third in part
(2) of Proposition \ref{prop:S-pullback_unital&associative} immediately
follows by Assumption \ref{ass:S-pullback}. 
In order to see existence of the $\mathcal S$-relative pullback listed second,
use first that by the assumption that
$\xymatrix@C=12pt{C \ar@{=}[r] & C \ar[r]^-h & D}\in \mathcal S$
and by Lemma \ref{lem:p_preserves_S}~(2) also
$\xymatrix@C=18pt{A \coten B C \ar@{=}[r] & A \coten B C \ar[r]^-{h.p_C} & D}$
is in $\mathcal S$. Since 
$\xymatrix@C=12pt{D & \ar[l]_-k E \ar@{=}[r] & E}$ is in $\mathcal S$ by
assumption, the existence of the stated $\mathcal S$-relative pullback
$(A\coten B C)\coten D E$ follows by Assumption \ref{ass:S-pullback}. 
An analogous reasoning applies to the $\mathcal S$-relative pullback $A\coten
B (C \coten D E)$ listed last in part (2) of Proposition
\ref{prop:S-pullback_unital&associative}. 
\end{proof}

\begin{corollary} \label{cor:S-pullback_moncat}
Consider an admissible class $\mathcal S$ of spans in an arbitrary category $\mathsf C$ for which Assumption \ref{ass:S-pullback} holds. For any object $B$ in $\mathsf C$ for which
$\xymatrix@C=10pt{B \ar@{=}[r] & B \ar@{=}[r] & B} \in \mathcal S$,
there is a monoidal category whose\\
\underline{objects} are spans 
$\xymatrix@C=12pt{B & \ar[l]_-t A \ar[r]^-s & B}$ which have their 
legs in $\mathcal S$ (cf. Definition \ref{def:legs_in_S})
\\
\underline{morphisms} are the morphisms of spans over $B$ \\
\underline{monoidal product} of 
$\xymatrix@C=12pt{B & \ar[l]_-t A \ar[r]^-s & B}$ and
$\xymatrix@C=12pt{B & \ar[l]_-{t'} A' \ar[r]^-{s'} & B}$ is
$\xymatrix@C=18pt{B & \ar[l]_-{t.p_A} A \coten B A' \ar[r]^-{s'.p_{A'}} & B}$
(its legs are in $\mathcal S$ by Lemma \ref{lem:p_preserves_S}) where $A\coten
B A'$ is the $\mathcal S$-relative pullback of  
$\xymatrix@C=12pt{A  \ar[r]^-s & B & \ar[l]_-{t'}  A'}$  \\
\underline{monoidal unit} is $\xymatrix@C=10pt{B \ar@{=}[r] & B \ar@{=}[r] &
  B}$.
\end{corollary}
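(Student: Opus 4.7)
The plan is to assemble the monoidal category structure directly from the results of Section \ref{sec:rel_pullback}, invoking Propositions \ref{prop:S-pullback_morphisms} and \ref{prop:S-pullback_unital&associative} as the main workhorses; no substantially new computation is required, only careful bookkeeping of the hypothesis that legs lie in $\mathcal S$.

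\emph{Well-definedness of the monoidal product on objects.} Given objects $\xymatrix@C=12pt{B & \ar[l]_-t A \ar[r]^-s & B}$ and $\xymatrix@C=12pt{B & \ar[l]_-{t'} A' \ar[r]^-{s'} & B}$ with legs in $\mathcal S$, the cospan $\xymatrix@C=12pt{A \ar[r]^-s & B & \ar[l]_-{t'} A'}$ has legs in $\mathcal S$ by Definition \ref{def:legs_in_S}, so Assumption \ref{ass:S-pullback} provides the relative pullback $A\coten B A'$. Lemma \ref{lem:p_preserves_S}, applied with $t$ on the left and $s'$ on the right, shows that post-composing the projections produces a span whose legs are in $\mathcal S$, as claimed. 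The span $\xymatrix@C=10pt{B \ar@{=}[r] & B \ar@{=}[r] & B}$ is an object precisely by the standing hypothesis on $B$.

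\emph{Functoriality.} For morphisms of spans over $B$, Proposition \ref{prop:S-pullback_morphisms}(1) (applied with $b=1_B$) produces the required induced morphism $a\morcoten c$, which is automatically a morphism of spans over $B$ since it commutes with the projections into $A$ and $A'$. Proposition \ref{prop:S-pullback_morphisms}(2) yields compatibility with composition, and $1\morcoten 1=1$ follows from the uniqueness clause of part (1).

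\emph{Coherence data and axioms.} The left and right unitors are supplied by Proposition \ref{prop:S-pullback_unital&associative}(1), whose hypotheses are met because the identity span on $B$ lies in $\mathcal S$ and the objects of our category have legs in $\mathcal S$. The associator is supplied by part (2); existence of the four iterated $\mathcal S$-relative pullbacks appearing there follows from Proposition \ref{prop:iterated_S-pullback} upon noting that the legs of the involved spans lie in $\mathcal S$ by definition. Naturality of both the unitors and the associator is automatic: by the observation following Definition \ref{def:S-pullback}, the pair of projections out of any $\mathcal S$-relative pullback is jointly monic, and in each naturality square both sides agree after composition with the relevant projections. Mac Lane's triangle and pentagon conditions are then precisely parts (3) and (4) of Proposition \ref{prop:S-pullback_unital&associative}.

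The only point requiring attention, rather than any genuine obstacle, is that at every stage of the construction one must verify that the spans needed for Assumption \ref{ass:S-pullback} actually belong to $\mathcal S$. This is either built into the definition of the objects of our category or propagated by Lemma \ref{lem:p_preserves_S}, so no fresh admissibility argument is ever needed.
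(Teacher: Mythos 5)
Your proposal is correct and follows essentially the same route as the paper: assemble the structure from Propositions \ref{prop:S-pullback_morphisms}, \ref{prop:S-pullback_unital&associative} and \ref{prop:iterated_S-pullback}, with the only remaining work being naturality of the constraints, which the paper likewise verifies by composing both sides of each naturality square with the jointly monic projections out of the relative pullbacks.
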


For any positive integer $n$, we denote by $A^{\expcoten B n}$ the $n$'th monoidal
power of the object $\xymatrix@C=12pt{B & \ar[l]_-t A \ar[r]^-s & B}$. We
adopt the convention $A^{\expcoten B 0}=B$.

\begin{proof}
For morphisms of spans, the $\mathcal S$-relative pullback in Proposition
\ref{prop:S-pullback_morphisms} is obviously a morphisms of spans.  
So in view of Proposition \ref{prop:S-pullback_morphisms}, Proposition
\ref{prop:S-pullback_unital&associative} and Proposition
\ref{prop:iterated_S-pullback}, we only need to check the naturality of the
unit and associativity constraints in Proposition
\ref{prop:S-pullback_unital&associative}.
Naturality of the unit constraints --- that is, commutativity of
$$
\xymatrix{
A \coten B B \ar[r]^-{p_A} \ar[d]_-{a\diagcoten 1} &
A \ar[d]^-a \\
A' \coten B B \ar[r]_-{p_{A'}} &
A'} \qquad
\xymatrix{
B \coten B C \ar[r]^-{p_C} \ar[d]_-{1\diagcoten c} &
C \ar[d]^-c \\
B \coten B C' \ar[r]_-{p_{C'}} &
C'}
$$
for any morphisms of spans $\xymatrix@C=12pt{A \ar[r]^-a & A'}$ and 
$\xymatrix@C=12pt{C \ar[r]^-c & C'}$ --- holds by construction.  

For any morphisms of spans 
$\xymatrix@C=12pt{A \ar[r]^-a & A'}$,  
$\xymatrix@C=12pt{C \ar[r]^-c & C'}$ and 
$\xymatrix@C=12pt{E \ar[r]^-e & E'}$,
let us compose both $(a\morcoten (c\morcoten e)).l$ and $l.((a\morcoten
c)\morcoten e)$ with the joint monomorphisms 
$$
\xymatrix@C=15pt{A' \coten {B'} (C' \coten {D'} E') \ar[r]^-{p_{A'}} & A'}
\xymatrix@C=15pt{A' \coten {B'} (C' \coten {D'} E') 
\ar[r]^-{p_{C' \diagcoten_{D'} E'}} & 
C' \coten {D'} E' \ar[r]^-{p_{C'}} & C'}
\xymatrix@C=15pt{A' \coten {B'} (C' \coten {D'} E') 
\ar[r]^-{p_{C' \diagcoten_{D'} E'}} & 
C' \coten {D'} E' \ar[r]^-{p_{E'}} & E'}.
$$
The resulting pairs of composite morphisms are easily seen to be equal to 
$$
\xymatrix@C=9pt{(A \coten B C) \coten D E 
\ar[r]^-{\raisebox{8pt}{${}_{p_{A \diagcoten_B C}}$}} &
A \coten B C \ar[r]^-{\raisebox{8pt}{${}_{p_A}$}} & 
A \ar[r]^-{\raisebox{8pt}{${}_a$}} & A'}
\xymatrix@C=9pt{(A \coten B C) \coten D E 
\ar[r]^-{\raisebox{8pt}{${}_{p_{A \diagcoten_B C}}$}} &
A \coten B C \ar[r]^-{\raisebox{8pt}{${}_{p_C}$}} & 
C \ar[r]^-{\raisebox{8pt}{${}_c$}} & C'}
\xymatrix@C=9pt{(A \coten B C) \coten D E 
\ar[r]^-{\raisebox{8pt}{${}_{p_E}$}} &
E \ar[r]^-{\raisebox{8pt}{${}_e$}} & E',}
$$
respectively. 
This proves the naturality of the associativity constraint.
\end{proof}

It may happen that in some category not only those cospans have 
pullbacks relative to some class $\mathcal S$ of spans whose legs are in
$\mathcal S$. (Recall from Example \ref{ex:CoMon-pullback} that in certain
categories of comonoids all pullbacks exist relative to the class of spans 
in Example \ref{ex:CoMonS}). However, the monoidal structure of Corollary
\ref{cor:S-pullback_moncat} is available only on the category of those spans
whose legs are in $\mathcal S$; see Proposition
\ref{prop:S-pullback_unital&associative}~(1).  

\begin{example} \label{ex:pullback-moncat}
If $\mathcal S$ is the class of all spans in a category $\mathsf C$ having
pullbacks, then Corollary \ref{cor:S-pullback_moncat} describes the monoidal
category of spans in $\mathsf C$ via the usual pullback.    
\end{example}

\begin{example} \label{ex:CoMon-moncat}
As in Example \ref{ex:CoMon-pullback}, let $\mathsf C$ be the category of
comonoids in a braided monoidal category $\mathsf M$ in which equalizers exist and are preserved by the monoidal product with any object. Then we know from Example
\ref{ex:ass_CoMon} that Assumption \ref{ass:S-pullback} holds for the
admissible class $\mathcal S$ in Example \ref{ex:CoMonS} of spans in $\mathsf C$.
For a comonoid $B$ in $\mathsf M$ the condition 
$\xymatrix@C=10pt{B \ar@{=}[r] & B \ar@{=}[r] & B} \in \mathcal S$
reduces to the cocommutativity of the comonoid $B$. 
So by Corollary \ref{cor:S-pullback_moncat} the category of spans of comonoids
over a cocommutative comonoid $B$ with legs in $\mathcal S$ is monoidal via
the $B$-cotensor product of \eqref{eq:cotensor}. 
\end{example}

\begin{definition} \label{def:S-cat}
Consider an admissible class $\mathcal S$ of spans in an arbitrary category $\mathsf C$ for which Assumption \ref{ass:S-pullback} holds, and  an object $B$ in $\mathsf C$ for which 
$\xymatrix@C=10pt{B \ar@{=}[r] & B \ar@{=}[r] & B} \in \mathcal S$.
An {\em $\mathcal S$-relative category} with object of objects $B$ is a monoid in the monoidal category of Corollary \ref{cor:S-pullback_moncat}.
Explicitly, this means the data in
\begin{equation} \label{eq:rel-cat}
\xymatrix@C=40pt{
B \ar@{ >->}[r]|(.55){\, i\, } &
A  \ar@<-4pt>@{->>}[l]_-s \ar@<4pt>@{->>}[l]^t &
A \coten B A \ar[l]_-d}
\end{equation}
subject to the following axioms.
\begin{itemize}
\item[{(a)}] The legs of 
$\xymatrix@C=12pt{B & \ar[l]_-t A \ar[r]^-s & B}$
are in $\mathcal S$ (so that the pullback 
$\xymatrix@C=12pt{A & \ar[l]_-{p_1} A\coten B A \ar[r]^-{p_2} & A}$ 
of 
$\xymatrix@C=12pt{A \ar[r]^-s & B & \ar[l]_-t A}$ relative to the class
$\mathcal S$ exists).
\item[{(b)}]  $i$ is a common section of $s$ and $t$ (that is,
$\xymatrix@C=20pt{
B \ar@{ >->}[r]|(.55){\, i\, } &
A  \ar@<-4pt>@{->>}[l]_-s \ar@<4pt>@{->>}[l]^t}$ is a {\em reflexive graph}).
\item[{(c)}] $t.d=t.p_1$ and $s.d=s.p_2$. 
\item[{(d)}] $d.(i\morcoten 1)=1=d.(1\morcoten i)$.
\item[{(e)}] $d.(d\morcoten 1)=d.(1\morcoten d)$.
\end{itemize}
\end{definition}

\begin{definition} \label{def:S-functor}
Consider an admissible class $\mathcal S$ of spans in an arbitrary category $\mathsf C$ for which Assumption \ref{ass:S-pullback} holds, and  an object $B$ in $\mathsf C$ for which 
$\xymatrix@C=10pt{B \ar@{=}[r] & B \ar@{=}[r] & B} \in \mathcal S$.
An {\em $\mathcal S$-relative functor} between $\mathcal S$-relative
categories as in \eqref{eq:rel-cat} consists of a pair of morphisms 
$(\xymatrix@C=12pt{B \ar[r]^-b & B'},\xymatrix@C=12pt{A \ar[r]^-a & A'})$
which is 
\begin{itemize}
\item[{(a)}] a morphism of spans in the sense that $b.s=s'.a$ and $b.t=t'.a$
  (hence by Proposition \ref{prop:S-pullback_morphisms} there exists the
  $\mathcal S$-relative pullback morphism
$\xymatrix@C=16pt{A\coten B A \ar[r]^-{a\diagcoten a} &A'\coten {B'} A'}$)
\item[{(b)}] it is compatible with the monoid structure in the sense that
  $a.i=i'.b$ and $a.d=d'.(a\morcoten a)$.
\end{itemize}
\end{definition}

\section*{Summary and outlook }

In this paper pullbacks were introduced relative to a chosen class of spans. On this class we made assumptions which allow for the pullback to define a monoidal structure on the category of spans with their `legs in this class'. Relative (to the above class of spans) categories were defined as monoids in the so obtained monoidal category. Non-trivial examples are presented in categories of comonoids in braided monoidal categories. 

All this is meant to be a preparation for a further analysis to be carried out in \cite{Bohm:Xmod_II} and \cite{Bohm:Xmod_III}. In these sequel papers we will apply this theory to categories of monoids in symmetric monoidal categories; that is, we consider relative categories of monoids. They will be shown to be equivalent to relative crossed modules of monoids (see \cite{Bohm:Xmod_II}) and to relative simplicial monoids of Moore length 1 (in \cite{Bohm:Xmod_III}). 

Again, interesting examples will arise from categories of comonoids in braided monoidal categories; whose monoids are known as bimonoids. Taking the full subcategory of Hopf monoids in a category of bimonoids, some recent results in the literature --- 
\cite{Aguiar,Villanueva,Majid,FariaMartins,Emir} 
--- will be placed in a broader context.


\bibliographystyle{plain}

\end{document}